\theoremstyle{plain}
\newtheorem{theorem}{Theorem}[section]
\newtheorem{corollary}[theorem]{Corollary}
\newtheorem{lemma}[theorem]{Lemma}
\newtheorem{conjecture}[theorem]{Conjecture}
\newtheorem{observation}[theorem]{Observation}
\theoremstyle{definition}
\newtheorem{definition}[theorem]{Definition}
\newtheorem{claim}{Claim}
\theoremstyle{remark}
\newcommand{\ds}{\mathrm{D}} 
\newcommand{\fd}{\phi_{\ds}} 
\newcommand{\cG}{\mathcal{G}}
\newcommand{\cgi}{\cG_i}
\newcommand{\cgip}{\cG_{i+1}}
\newcommand{\bbZ}{\mathbb{Z}}
\newcommand{\F}{\mathcal{F}}
\DeclareMathOperator{\Tr}{Tr}
\title{Fillings of skew shapes avoiding diagonal patterns\thanks{Supported by project 18-19158S of 
the Czech Science Foundation and by GAUK project 1766318}}
\author{V{\'\i}t Jel{\'\i}nek \and Mark Karpilovskij}
\affiliation{Computer Science Institute, Charles University, Prague}
\keywords{skew shapes, Ferrers diagram, Wilf equivalence}
\begin{document}
\maketitle
\begin{abstract}
A skew shape is the difference of two top-left justified Ferrers shapes sharing the same top-left 
corner. We study integer fillings of skew shapes. As our first main result, we show that 
for a specific hereditary class of skew shapes, which we call $\ds$-free shapes, the fillings that 
avoid a north-east chain of size $k$ are in bijection with fillings that avoid a south-east chain of 
the same size. Since Ferrers shapes are a subclass of $\ds$-free shapes, this result can be seen as 
a generalization of previous analogous results for Ferrers shapes. 

As our second main result, we construct a bijection between 01-fillings of an arbitrary skew shape 
that avoid a south-east chain of size 2, and the 01-fillings of the same shape that simultaneously 
avoid a north-east chain of size 2 and a particular non-square subfilling. This generalizes a 
previous result for transversal fillings.
\end{abstract}

\section{Introduction}
Wilf equivalence, and the closely related notion of Wilf order, are undoubtedly among the key 
notions in the study of combinatorics of permutations. Despite considerable amount of research, 
many key questions related to these concepts remain open. 

One of the most successful approaches in the study of Wilf equivalence is based on the concept of 
pattern avoidance in fillings of specific shapes (see Section~\ref{sec-prelim} for precise 
definitions). Of particular interest are the so-called transversal fillings, which contain exactly 
one 1-cell in each row and each column, and can be viewed as generalizations of permutation 
matrices. Transversal fillings of Ferrers shapes have played a crucial part in establishing most of 
the known results on Wilf equivalence~\cite{bwx,SW}, as well as in the study of many related 
combinatorial objects, such as involutions \cite{BMS}, words~\cite{GTZ}, matchings~\cite{Dyck,KZ}, 
graphs~\cite{DM2,DM} or set partitions~\cite{Chen,JM}. 

Most of the research into pattern-avoiding fillings focuses on the avoidance of diagonal 
patterns. We let $\delta_k$ and $\iota_k$ denote the decreasing and increasing diagonal pattern of 
size $k$, respectively. Backelin, West and Xin~\cite{bwx} have shown that the number of transversal 
fillings of a Ferrers shape that avoid $\delta_k$ is the same as the number of such fillings that 
avoid~$\iota_k$. This was later generalized to non-transversal fillings~\cite{Krattenthaler06}, and 
to more general shapes, such as stack polyominoes~\cite{GuPo}, moon polyominoes~\cite{Rubey11}, or 
almost-moon 
polyominoes~\cite{PoYa}.

In this paper, we consider yet another family of shapes, namely the so-called skew shapes, 
which can be seen as a difference of two Ferrers shapes, or equivalently, as a shape whose 
boundary is a union of two nondecreasing lattice paths with common endpoints. Our 
interest in skew shapes is motivated by the following conjecture.

\begin{conjecture}[Skew shape conjecture] For any skew shape $S$ and any number $k$, the number of 
transversal fillings of $S$ that avoid $\iota_k$ is greater than or equal to the number of such 
fillings that avoid~$\delta_k$.
\end{conjecture}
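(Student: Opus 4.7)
The plan is to attack the conjecture by establishing an injection from transversal $\delta_k$-avoiding fillings of a skew shape $S=\lambda/\mu$ into transversal $\iota_k$-avoiding fillings of $S$. The natural starting point is the bijection of Backelin, West and Xin, which gives equality on any Ferrers shape, that is, in the special case $\mu=\emptyset$. The goal would then be to modify or specialize this bijection so that it respects the forbidden inner region $\mu$: applied to a transversal filling of $\lambda$ whose $1$-cells all lie in $\lambda\setminus\mu$, it should again produce such a filling. If this can be arranged in the direction from $\delta_k$-avoiders to $\iota_k$-avoiders, but not necessarily in the reverse direction, then restricting the map to fillings supported on $\lambda\setminus\mu$ yields the required injection.

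First I would reformulate the BWX bijection through growth diagrams or an RSK-type procedure and investigate which geometric invariants it preserves. A useful local condition would be that the number of $1$-cells inside every upper-left sub-rectangle of $\lambda$ is an invariant of the bijection; that would make the bijection compatible with forbidding any staircase-shaped inner region. Identifying a direction of the bijection with such a preservation property, even if the other direction fails it, would already cover broad classes of skew shapes. The paper's $\ds$-free result can be read in this spirit, suggesting that inner shapes $\mu$ free of certain configurations admit the full bijection, and hence equality rather than mere inequality.

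Second, I would attempt an induction on $|\mu|$. Adding a cell $c$ to $\mu$ corresponds to forbidding $c$ as a position of a transversal $1$. Splitting the transversal fillings of $\lambda/(\mu\setminus\{c\})$ according to whether $c$ is occupied, and observing that fillings with $c$ occupied correspond, after deleting the row and column of $c$, to transversal fillings of a smaller skew shape, produces a recursion that relates the difference $N_{\iota_k}(\lambda/\mu)-N_{\delta_k}(\lambda/\mu)$ to analogous differences on smaller shapes. If a suitably strong monotonicity invariant propagates along these steps, the induction closes; otherwise one would look for compensating contributions from the set of fillings with a $1$ in $c$.

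The main obstacle is controlling how pattern-avoidance and the support restriction interact. The BWX bijection is defined globally on the ambient Ferrers shape, and there is no a~priori reason why the image of a filling supported inside $\lambda\setminus\mu$ is again supported there; if it always were, the theorem would hold with equality on every skew shape, which is already false in small examples. One must therefore find an injection that is genuinely looser than a bijection, and I expect this to be the hard part of the conjecture. Substantive progress is likely to require either a new injection whose local support behaviour can be explicitly tracked, perhaps engineered from the $\ds$-free bijection of the present paper as a building block, or an algebraic or transfer-matrix argument that extracts the inequality from a generating-function identity without exhibiting an explicit combinatorial map.
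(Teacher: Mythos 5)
The statement you are asked to prove is an open conjecture: the paper itself offers no proof of it, only partial evidence (the case $k\le 2$ from earlier work of Jel\'{\i}nek and of Burstein--Pantone, and the case of $\ds$-free skew shapes, where equality holds via Theorem~\ref{thm-sskew}). Your text is accordingly not a proof but a research programme, and it should not be presented as a proof attempt. Each of your three threads stops exactly where the real difficulty begins. For the first thread, you correctly observe that the Backelin--West--Xin bijection on the ambient Ferrers shape $\lambda$ need not map fillings supported on $\lambda\setminus\mu$ to fillings supported on $\lambda\setminus\mu$, and that it cannot always do so because equality fails already for $S=\ds$ (where $\Tr(\ds,\iota_2)=2$ but $\Tr(\ds,\delta_2)=1$); but you then leave entirely open what replaces the bijection, which is the whole problem. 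The proposed invariant (``the number of $1$-cells in every upper-left sub-rectangle is preserved'') is too strong: a map preserving all such counts would fix the filling up to very little freedom, and in any case no such preservation property of the BWX/growth-diagram bijection compatible with arbitrary staircase inner regions is known or established here.

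For the second thread, the induction on $|\mu|$ founders on the same point: deleting the row and column of an occupied cell $c$ does not produce a skew shape of the same combinatorial type in a way that controls $\delta_k$- versus $\iota_k$-avoidance, and you name no candidate for the ``suitably strong monotonicity invariant'' that would close the induction. Since the difference $\Tr(S,\iota_k)-\Tr(S,\delta_k)$ can be zero on some shapes and positive on others, any inductive scheme must explain where the slack is created, and your recursion gives no handle on that. In short, there is no completed step anywhere in the proposal; what you have written is a reasonable survey of obstacles, consistent with the paper's own assessment that the conjecture remains open beyond $k\le 2$ and beyond the $\ds$-free case.
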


Verifying the skew shape conjecture for any particular value of $k$ would imply that for 
any two permutations $\alpha$ and $\beta$, the pattern $\alpha\ominus\delta_k\ominus\beta$ is more 
Wilf-restrictive than $\alpha\ominus\iota_k\ominus\beta$, where $\ominus$ denotes the skew-sum of 
permutations (see, e.g., Vatter's survey~\cite{Vatter} for permutation-related terminology). 
However, the conjecture has so far only been verified for $k\le 2$~\cite{bp,phd}. 

In this paper, we prove two new results on diagonal-avoiding fillings of skew shapes, both of which 
indirectly support the skew shape conjecture. As our first main result, we will show that the 
conjecture holds for skew shapes that avoid the shape $\ds$ from Figure~\ref{fig-D} as subshape. In 
fact, we will show that for $\ds$-avoiding skew shapes the conjecture holds with equality, via a 
bijection that extends to general integer fillings. Since a Ferrers shape is a special case of a 
$\ds$-avoiding skew shape, this result can be seen as another generalization of the result of 
Backelin et al.~\cite{bwx} on transversals of Ferrers shapes and its generalization by 
Krattenthaler~\cite{Krattenthaler06} to non-transversal fillings. 

\begin{figure}
\centerline{\includegraphics{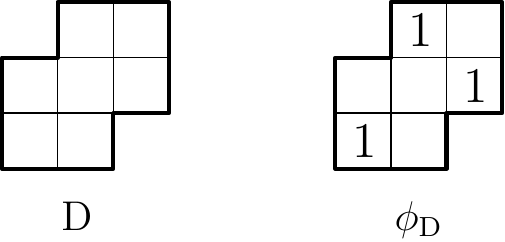}}
 \caption{The shape $\ds$ (left) and its filling $\fd$ (right).}\label{fig-D}
\end{figure}

As our second main result, we show that for any skew shape $S$, the number of 01-fillings of $S$ 
avoiding the pattern $\delta_2$ is the same as the number of 01-fillings of $S$ that avoid
both $\iota_2$ and the pattern $\fd$ from Figure~\ref{fig-D}. This has been previously known for 
transversal fillings, where the proof follows from the fact that each skew shape that admits at 
least one transversal filling admits exactly one $\delta_2$-avoiding filling and exactly one 
$\{\iota_2,\fd\}$-avoiding filling. To extend the result to general, i.e., non-transversal, 
fillings, 
we devise a new bijective argument.

\section{Preliminaries}\label{sec-prelim}

\paragraph{Shapes.}
A \emph{cell} is a unit square whose vertices are points of the $\bbZ^2$ integer grid, and a 
\emph{shape} is the union of finitely many cells. The \emph{coordinates} of a cell are the 
Cartesian coordinates of its top-right corner. If two shapes differ only by a translation in the 
plane, we shall treat them as identical. We will henceforth assume that in each nonempty shape
we consider, the leftmost cell has horizontal coordinate equal to 1, and the bottommost cell has 
vertical coordinate equal to~1, unless explicitly stated otherwise. We use the term \emph{cell 
$(i,j)$} for the cell in column $i$ and row~$j$. The \emph{height} of a shape is the vertical 
coordinate of its topmost cell, and its \emph{width} is the horizontal coordinate of 
its rightmost cell. We say that a cell $(i,j)$ is \emph{strictly to the left} of a cell $(i',j')$ if 
$i<i'$, and we say that it is \emph{weakly to the left} if $i\le i'$. We will also speak of 
$(i',j')$ being weakly (or strictly) to the right, above, or below $(i,j)$, with obvious meanings. 

Two cells are \emph{adjacent} if their boundaries share an edge. For a shape $S$, the transitive 
closure of the adjacency relation on the cells of $S$ yields an equivalence, whose blocks are the 
\emph{connected components} of~$S$. If $S$ has a single component, we say that it is 
\emph{connected}. Connected shapes are known as \emph{polyominoes}.

\begin{figure}[ht]
\centering
\includegraphics[width=0.8\textwidth]{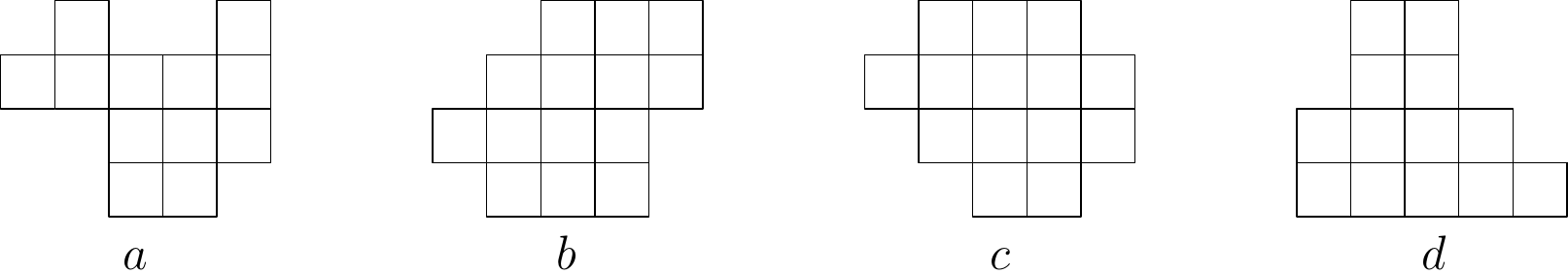}
\caption{Examples of shapes: (a) non-convex, (b) convex but not intersection-free, (c) moon 
polyomino, (d) bottom-justified moon polyomino.}
\label{fig-examples}
\end{figure}

A shape is \emph{convex} if for any two of its cells in the same row or column it also contains 
every cell in between. Two columns of a shape are \emph{comparable} if the set of row coordinates of 
one column is contained in the set of row coordinates of the other; comparable rows are defined 
analogously. We say that a shape is \emph{intersection-free} if any two of its columns are 
comparable. Note that this is equivalent to having any two of its rows comparable. Convex 
intersection-free shapes are known as \emph{moon polyominoes} (see Figure~\ref{fig-examples}). 

A shape is \emph{top-justified} if the topmost cells of all of its columns are in the same row and 
it is \emph{left-justified} if the leftmost cells of all of its rows are in the same column. 
\emph{Bottom-justified} and \emph{right-justified} shapes are defined analogously. A shape 
is \emph{top-left justified} if it is both top-justified and left-justified, and similarly for other 
combinations of horizontal and vertical directions.

A \emph{northwest Ferrers shape}, or \emph{NW Ferrers shape} for short, is a top-left justified 
moon polyomino, and similarly, a \emph{SE Ferrers shape} is a bottom-right justified moon 
polyomino; see Figure~\ref{fig_nw_se}.

\begin{figure}[ht]
\centering
%
%
\includegraphics[width=0.8\textwidth]{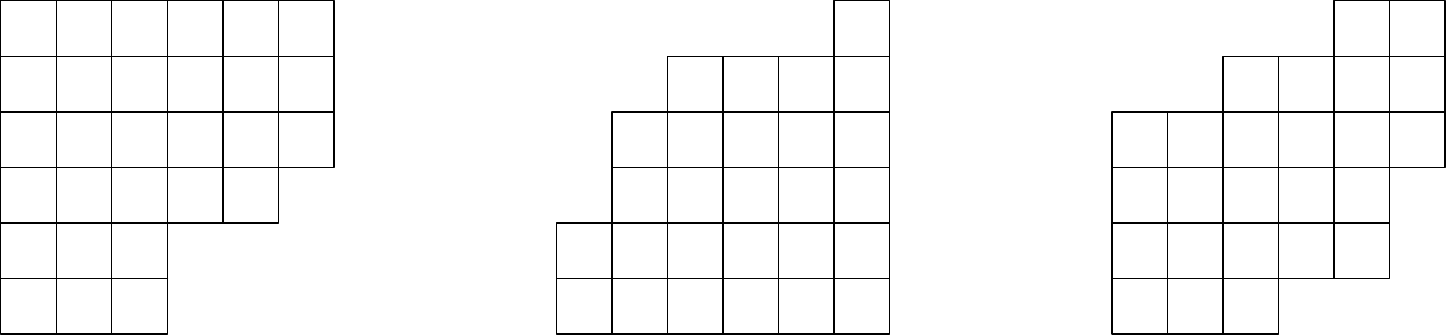}
\caption{Three types of shapes relevant for our paper. Left: a NW Ferrers shape; middle: a SE 
Ferrers shape; right: a skew shape.}
\label{fig_nw_se}
\end{figure}

A \emph{skew shape} is a shape $S$ that can be obtained by taking two NW Ferrers shapes $F_1$ 
and $F_2$ with a common top-left cell and putting $S=F_1\setminus F_2$, i.e., $S$ is the union of 
those cells of $F_1$ that do not belong to~$F_2$. An equivalent characterisation of skew shapes is 
provided by the next observation.

\begin{observation}\label{obs-skew}
 A shape $S$ is a skew shape if and only for any two columns $i_1<i_2$ and any two rows $j_1<j_2$, 
if $S$ contains both $(i_1,j_2)$ and $(i_2,j_1)$, then $S$ contains all the cells from the 
set $\{(i,j);\; i_1\le i\le i_2 \text{ and } j_1\le j\le j_2\}$.
\end{observation}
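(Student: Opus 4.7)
The plan is to prove the two directions of the equivalence separately. The forward direction is short, while the reverse direction is where the real work lies.

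For the forward direction, I would assume $S = F_1 \setminus F_2$ with $F_1, F_2$ being NW Ferrers shapes sharing a common top-left cell $(1, h)$. The key fact I would rely on is the \emph{hook closure} property of a NW Ferrers shape $F$ with top row $h$: whenever $(i, j) \in F$ and $i' \le i$, $j \le j' \le h$, we have $(i', j') \in F$. Given cells $(i_1, j_2), (i_2, j_1) \in S$ with $i_1 < i_2$ and $j_1 < j_2$, hook closure of $F_1$ applied to $(i_2, j_1)$ places the entire target rectangle $\{(i,j):i_1\le i\le i_2,\; j_1\le j\le j_2\}$ inside $F_1$. It then remains to argue that no cell of the rectangle lies in $F_2$: if some $(i, j)$ in the rectangle were in $F_2$, then hook closure of $F_2$ would force $(i_1, j_2) \in F_2$, contradicting $(i_1, j_2) \in S = F_1 \setminus F_2$.

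For the reverse direction, I plan to build $F_1$ and $F_2$ explicitly. Let $h$ denote the top row of $S$, and define $F_1$ as the smallest NW-closed subset of rows $1,\ldots,h$ that contains $S$: concretely, $(i, j) \in F_1$ iff $1 \le j \le h$ and some cell $(i_0, j_0) \in S$ satisfies $i \le i_0$ and $j \ge j_0$. By construction $F_1$ is top-justified, left-justified, convex and intersection-free, hence a NW Ferrers shape containing $S$. I then set $F_2 := F_1 \setminus S$ and aim to show that $F_2$ is itself a NW Ferrers shape with the same top-left cell $(1, h)$. The core step is proving top-justification and column-convexity of $F_2$, with the row analogues following by symmetry. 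Given $(i, j) \in F_2$ together with a witness $(i_0, j_0) \in S$ certifying $(i, j) \in F_1$, I would suppose $(i, j') \in S$ for some $j < j' \le h$ and apply the rectangle-filling property of $S$ to the pair $(i, j'), (i_0, j_0)$; this forces $(i, j) \in S$, contradicting $(i, j) \in F_2$.

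The main obstacle I foresee is the degenerate situation where the chosen witness $(i_0, j_0)$ happens to share a coordinate with $(i, j)$, so that the strict inequalities $i_1 < i_2$, $j_1 < j_2$ demanded by the property cannot be invoked directly. I plan to handle this by selecting the witness extremally — for instance, taking $(i_0, j_0)$ with $i_0$ maximal subject to $i_0 \ge i$ and $j_0 \le j$ — and, where the extremal choice still forces equality in one coordinate, by chaining the property across two suitably chosen cells of $S$ to route around the degeneracy. Once $F_2$ is shown to be NW Ferrers, the identity $S = F_1 \setminus F_2$ and the sharing of the top-left cell $(1, h)$ with $F_1$ are immediate from the construction.
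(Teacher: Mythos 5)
The paper states this as an \emph{Observation} and gives no proof at all, so there is no argument of the authors' to compare yours against; it has to be judged on its own merits. Your forward direction is correct and complete: the hook-closure property of a NW Ferrers shape (each cell pulls in every cell weakly to its left and weakly above it, up to the common top row $h$) is exactly the right tool, used once in $F_1$ to capture the whole rectangle and once in $F_2$ to show none of its cells was deleted.

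The reverse direction, however, has a genuine gap at precisely the point you flag as an ``obstacle,'' and no extremal choice of witness or chaining can repair it: with the strict inequalities $i_1<i_2$ and $j_1<j_2$ as written, the condition says nothing about cells confined to a single column or a single row, and the implication is actually false there. Take $S=\{(1,1),(1,3)\}$: the condition holds vacuously, yet $S$ is not a skew shape --- any NW Ferrers shape containing both cells also contains the cell between them by column convexity, so that middle cell lies in $F_2$, and a nonempty NW Ferrers shape $F_2$ top-justified at the same top row then contains the upper cell of $S$ as well, a contradiction. In your construction this is exactly the situation where every witness $(i_0,j_0)$ shares the column of $(i,j)$ and there is no second column to chain through. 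The statement (and your construction of $F_1$ as the NW-closure and $F_2=F_1\setminus S$) does go through if one reads the condition as also covering the degenerate rectangles $i_1=i_2$ or $j_1=j_2$, i.e.\ if row- and column-convexity of $S$ is granted: then the case $i=i_0$ is disposed of by column convexity directly and no chaining is needed. Two smaller points: if $(1,h)\in S$ then $F_2=\emptyset$ and has no top-left cell to share with $F_1$, so the case $S=F_1$ needs a sentence; and since the paper only ever uses the forward implication (in the proof of Lemma~\ref{lemma_forb_rec}), the half you prove correctly is the half that actually matters.
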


\paragraph{Fillings of shapes.}
A \emph{filling} of a shape $S$ is a function $\phi$ that assigns a nonnegative integer to every 
cell of~$S$. We write $\phi(i,j)$ for the value assigned by $\phi$ to a cell~$(i,j)$.  A 
\emph{binary filling} (also known as \emph{01-filling}) is a filling that only uses the values 0 
and~1. A cell that has been assigned the value 0 (or 1) by a binary filling $\phi$ is referred to as 
a \emph{0-cell} (or \emph{1-cell}, respectively) of~$\phi$. If a set $X$ of cells of a shape $S$ 
contains at least one 1-cell of a filling $\phi$, we say that $X$ is \emph{nonzero} in $\phi$, 
written as $X\neq 0$; otherwise $X$ is \emph{zero}, and we write $X=0$.

A binary filling $\phi$ of a shape $S$ is \emph{sparse} if every row and every column of $S$ has at 
most one 1-cell. A binary filling $\phi$ is a \emph{transversal filling} (or just a 
\emph{transversal}) if every row and every column of $S$ has exactly one 1-cell. 

In figures, we will often represent 0-cells by empty boxes and 1-cells by boxes with crosses; see 
Figure~\ref{figure_fillings} for examples.

\begin{figure}
\centering
\subfloat[][A transversal filling] {
\begin{tikzpicture}[line cap=round,line join=round,>=triangle 45,x=0.75cm,y=0.75cm]
\clip(-0.21,-1.19) rectangle (6.12,5.2);
\draw (1,4)-- (1,0);
\draw (1,0)-- (5,0);
\draw (5,0)-- (5,4);
\draw (5,4)-- (1,4);
\draw (2,0)-- (2,5);
\draw (3,0)-- (3,5);
\draw (4,0)-- (4,5);
\draw (2,5)-- (5,5);
\draw (5,5)-- (5,4);
\draw (3,0)-- (3,-1);
\draw (3,-1)-- (6,-1);
\draw (6,-1)-- (6,2);
\draw (6,2)-- (5,2);
\draw (5,1)-- (0,1);
\draw (5,2)-- (0,2);
\draw (5,3)-- (0,3);
\draw (0,1)-- (0,4);
\draw (0,4)-- (1,4);
\draw (4,0)-- (4,-1);
\draw (5,0)-- (5,-1);
\draw (5,0)-- (6,0);
\draw (5,1)-- (6,1);
\draw (0.5,3.5) node {$\times$};
\draw (2.5,2.5) node {$\times$};
\draw (4.5,1.5) node {$\times$};
\draw (1.5,0.5) node {$\times$};
\draw (3.5,4.5) node {$\times$};
\draw (5.5,-0.5) node {$\times$};
\end{tikzpicture}
}
\subfloat[][A non-sparse filling] {
\begin{tikzpicture}[line cap=round,line join=round,>=triangle 45,x=0.75cm,y=0.75cm]
\clip(-0.21,-1.19) rectangle (6.12,5.2);
\draw (1,4)-- (1,0);
\draw (1,0)-- (5,0);
\draw (5,0)-- (5,4);
\draw (5,4)-- (1,4);
\draw (2,0)-- (2,5);
\draw (3,0)-- (3,5);
\draw (4,0)-- (4,5);
\draw (2,5)-- (5,5);
\draw (5,5)-- (5,4);
\draw (3,0)-- (3,-1);
\draw (3,-1)-- (6,-1);
\draw (6,-1)-- (6,2);
\draw (6,2)-- (5,2);
\draw (5,1)-- (0,1);
\draw (5,2)-- (0,2);
\draw (5,3)-- (0,3);
\draw (0,1)-- (0,4);
\draw (0,4)-- (1,4);
\draw (4,0)-- (4,-1);
\draw (5,0)-- (5,-1);
\draw (5,0)-- (6,0);
\draw (5,1)-- (6,1);
\draw (1.5,0.5) node {$\times$};
\draw (1.5,2.5) node {$\times$};
\draw (2.5,2.5) node {$\times$};
\draw (3.5,2.5) node {$\times$};
\draw (3.5,3.5) node {$\times$};
\draw (4.5,4.5) node {$\times$};
\end{tikzpicture}
}
\caption{Examples of binary fillings}
\label{figure_fillings}
\end{figure}
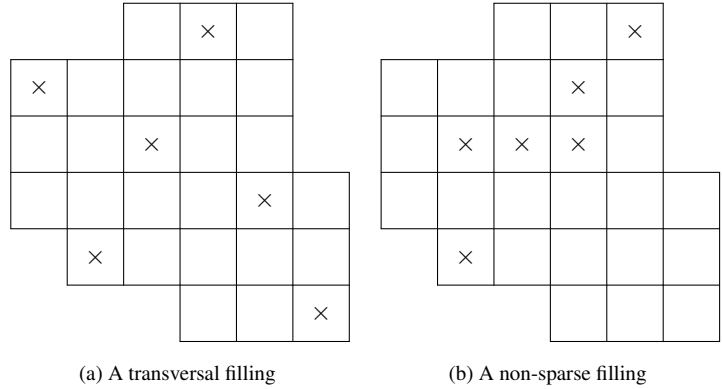

\begin{figure}[tb]
 \centerline{\includegraphics[width=0.7\textwidth]{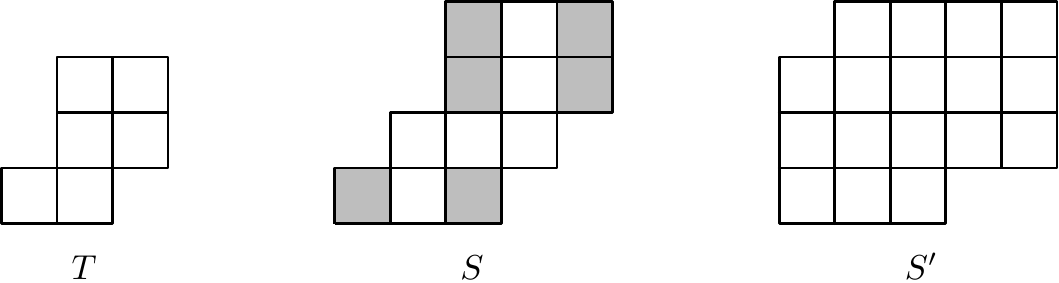}}
\caption{Illustration of shape containment. Left: a shape $T$. Middle: a shape $S$ which contains 
$T$ as a subshape. The shaded cells correspond to an occurrence of $T$ formed by columns $(1,3,5)$ 
and rows $(1,3,4)$. Right: a shape $S'$ that does not contain~$T$.}\label{fig-contain}
\end{figure}

\paragraph{Containment of shapes and fillings.}
Let $S$ be a shape of height $h$ and width $w$, and let $T$ be a shape of height $k$ and 
width~$\ell$. We say that $S$ \emph{contains} $T$, or that $T$ is a \emph{subshape} of $S$, if there 
is a sequence of row indices $1\le r_1 < r_2 <\dotsb < r_k\le h$ and column indices $1\le c_1 < c_2 
<\dotsb<c_\ell\le w$, such that for every $i$ and $j$, the cell $(i,j)$ is contained in $T$ if and 
only if the cell $(c_i,r_j)$ is in~$S$. The rows $(r_1,\dotsc,r_k)$ and columns 
$(c_1,\dotsc,c_\ell)$ together form an \emph{occurrence} of $T$ in~$S$. If $S$ does not contain 
$T$, we say that $S$ \emph{avoids}~$T$. See Figure~\ref{fig-contain} for an example.

With $S$ and $T$ as above, if $\phi_S$ is a filling of $S$ and $\phi_T$ a filling of 
$T$, we say that $\phi_S$ \emph{contains} $\phi_T$, or that $\phi_T$ is a \emph{subfilling} of 
$\phi_S$, if there is an occurrence of $T$ in $S$ in rows $(r_1,\dotsc,r_k)$ and columns 
$(c_1,\dotsc,c_\ell)$ with the additional property that for every cell $(i,j)$ of $T$, we have 
$\phi_T(i,j)\le\phi_S(c_i,r_j)$. We again say that the rows $(r_1,\dotsc,r_k)$ and columns 
$(c_1,\dotsc,c_\ell)$ form an \emph{occurrence} of $\phi_T$ in~$\phi_S$. In the case when $\phi_T$ 
is binary, the condition $\phi_T(i,j)\le\phi_S(c_i,r_j)$ simply says that each 1-cell of $\phi_T$ 
must get mapped to a nonzero cell of $\phi_S$ by the occurrence.

Note that any subshape of a moon polyomino is again a moon polyomino, any subshape of a skew shape 
is a skew shape, and any subshape of a Ferrers shape is a Ferrers shape. 

Let $S_n$ be the square shape of size $n\times n$, i.e., the shape consisting of the cells $(i,j)$ 
with $1\le i\le n$ and $1\le j\le n$. The transversal fillings of $S_n$ correspond in a natural way 
to permutations of order $n$: given a permutation $\pi=\pi_1\pi_2\dotsb\pi_n$ of order $n$, 
represented by a sequence of numbers in which every value from 1 to $n$ appears exactly once, we 
represent $\pi$ by a transversal filling of $S_n$ whose 1-cells are the cells $(i,\pi_i)$ for 
$i=1,\dotsc,n$. Such a transversal filling is usually called the \emph{permutation diagram} 
of~$\pi$. In this paper, we shall make no explicit distinction between a permutation and its diagram, 
and if there is no risk of ambiguity, we will use the term permutation to refer both to the sequence 
of integers and to the diagram. Note that the containment relation defined above for binary fillings 
is a generalization of the classical Wilf containment of permutations. 

In this paper, we shall be mostly working with two types of forbidden patterns, corresponding to 
increasing and decreasing permutations, respectively. For any $k\ge 1$, let $\iota_k$ denote the 
diagram of the increasing permutation $1,2,3,\dotsc, k$; in other words, $\iota_k$ is the filling 
of the $k\times k$ square shape with 1-cells $(i,i)$ for $1\le i\le k$. Symmetrically, let 
$\delta_k$ be the diagram of the decreasing permutation $k,k-1,\dotsc,1$. An occurrence of $\iota_k$ 
in a filling $\phi$ is also referred to as a \emph{NE-chain of length $k$} in $\phi$, while an 
occurrence of $\delta_k$ is a \emph{SE-chain of length $k$}. See Figure \ref{figure_chains} for 
examples.

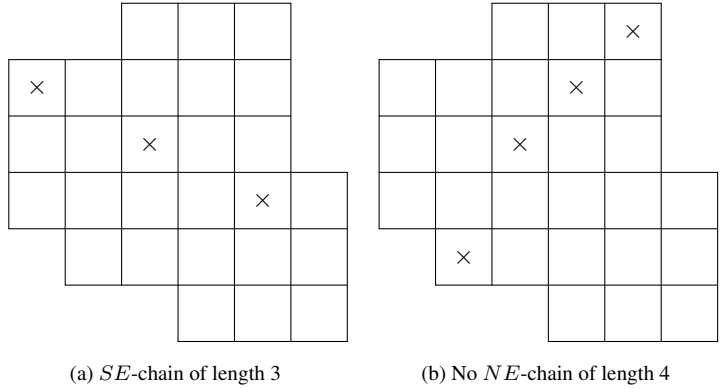
\begin{figure}
\centering
\subfloat[][$SE$-chain of length 3] {
\begin{tikzpicture}[line cap=round,line join=round,>=triangle 45,x=0.75cm,y=0.75cm]
\clip(-0.21,-1.19) rectangle (6.12,5.2);
\draw (1,4)-- (1,0);
\draw (1,0)-- (5,0);
\draw (5,0)-- (5,4);
\draw (5,4)-- (1,4);
\draw (2,0)-- (2,5);
\draw (3,0)-- (3,5);
\draw (4,0)-- (4,5);
\draw (2,5)-- (5,5);
\draw (5,5)-- (5,4);
\draw (3,0)-- (3,-1);
\draw (3,-1)-- (6,-1);
\draw (6,-1)-- (6,2);
\draw (6,2)-- (5,2);
\draw (5,1)-- (0,1);
\draw (5,2)-- (0,2);
\draw (5,3)-- (0,3);
\draw (0,1)-- (0,4);
\draw (0,4)-- (1,4);
\draw (4,0)-- (4,-1);
\draw (5,0)-- (5,-1);
\draw (5,0)-- (6,0);
\draw (5,1)-- (6,1);
\draw (0.5,3.5) node {$\times$};
\draw (2.5,2.5) node {$\times$};
\draw (4.5,1.5) node {$\times$};
\end{tikzpicture}
}
\subfloat[][No $NE$-chain of length 4] {
\begin{tikzpicture}[line cap=round,line join=round,>=triangle 45,x=0.75cm,y=0.75cm]
\clip(-0.21,-1.19) rectangle (6.12,5.2);
\draw (1,4)-- (1,0);
\draw (1,0)-- (5,0);
\draw (5,0)-- (5,4);
\draw (5,4)-- (1,4);
\draw (2,0)-- (2,5);
\draw (3,0)-- (3,5);
\draw (4,0)-- (4,5);
\draw (2,5)-- (5,5);
\draw (5,5)-- (5,4);
\draw (3,0)-- (3,-1);
\draw (3,-1)-- (6,-1);
\draw (6,-1)-- (6,2);
\draw (6,2)-- (5,2);
\draw (5,1)-- (0,1);
\draw (5,2)-- (0,2);
\draw (5,3)-- (0,3);
\draw (0,1)-- (0,4);
\draw (0,4)-- (1,4);
\draw (4,0)-- (4,-1);
\draw (5,0)-- (5,-1);
\draw (5,0)-- (6,0);
\draw (5,1)-- (6,1);
\draw (1.5,0.5) node {$\times$};
\draw (2.5,2.5) node {$\times$};
\draw (3.5,3.5) node {$\times$};
\draw (4.5,4.5) node {$\times$};
\end{tikzpicture}
}
\caption{Examples of chains in a shape with a binary filling. Left: an example of a SE-chain 
of length 3. Right: a filling with no NE-chain of length 4, but with two NE-chains of length 3. 
Note that the four 1-cells in the right figure do not form a single NE-chain of length 4, because 
the subshape spanned by the four 1-cells is not a square shape: the column of the leftmost 1-cell 
does not intersect the row of the topmost 1-cell inside the shape.}
\label{figure_chains}
\end{figure}

\paragraph{Results.} Our first main result deals with transversal fillings of skew shapes of a 
special type. Let $\ds$ be the skew shape in Figure~\ref{fig-D} ($\ds$ stands for `dented shape'). We 
say that a skew shape $S$ is \emph{$\ds$-free} if $S$ avoids~$\ds$. 

Let $\Tr(S,\pi)$ be the number of transversal fillings of the shape $S$ that avoid the 
filling~$\pi$. Backelin et al.~\cite{bwx} have shown that for any $k$ and any Ferrers shape $F$, we 
have the identity $\Tr(F,\delta_k)=\Tr(F,\iota_k)$. This was later generalized by 
Krattenthaler~\cite{Krattenthaler06} to general fillings of Ferrers shapes. Our first main result 
generalizes this identity to a broader class of shapes, namely to $\ds$-free skew shapes. 

\begin{theorem}\label{thm-sskew}
For any $\ds$-free skew shape $S$, there is a bijection transforming a filling $\phi$ of $S$ to a 
filling $\phi'$ of $S$ with these properties.
\begin{itemize}
\item For any $k\ge 1$,  $\phi$ avoids $\delta_k$ if and only if $\phi'$ avoids~$\iota_k$.
\item There is a permutation $\rho$ of row-indices of $S$ and a permutation $\sigma$ of 
column-indices of $S$ such that for any $i$, the entries in the $i$-th row of $\phi$ have the same 
sum as the entries in the $\rho_i$-th row of $\phi'$, and the entries in the $i$-th column of $\phi$ 
have the same sum as the entries in the $\sigma_i$-th row of~$\phi'$.
\end{itemize}
\end{theorem}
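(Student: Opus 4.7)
My plan is to reduce the statement to Krattenthaler's theorem on general fillings of Ferrers shapes~\cite{Krattenthaler06}, via a sequence of shape simplifications, each supported by a local bijection on fillings. The strategy is inspired by Rubey's argument~\cite{Rubey11} for moon polyominoes, in which intersection-freeness is the key structural input; for $\ds$-free skew shapes, the avoidance of $\ds$ is expected to play the analogous structural role.

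I would first analyze the structure of $\ds$-free skew shapes. Each column of a skew shape is a contiguous row-interval, and Observation~\ref{obs-skew} controls how any two such intervals may interact. I would use $\ds$-avoidance to show that every failure of comparability between two columns of $S$ has a locally simple form, so that it can be removed by modifying one or two adjacent columns while keeping the shape skew and $\ds$-free. I would then introduce a family of elementary operations that carry out such adjacent modifications, together with a monotone complexity measure on $\ds$-free skew shapes (for instance, the number of incomparable column pairs) that each elementary operation strictly decreases. Iteration reduces any $\ds$-free skew shape to an intersection-free skew shape, which, combined with the top/left-justification of skew shapes, must be a Ferrers shape.

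For each elementary operation I would construct an explicit local bijection $\psi$ on fillings that simultaneously preserves the number of NE-chains of every length, the number of SE-chains of every length, and acts as a permutation on the multisets of row and column sums. The matching of nonzero cells across the two shapes would be defined by case analysis on how the two affected columns overlap. Composing all these local bijections with Krattenthaler's bijection on the final Ferrers shape yields the desired bijection $\phi\mapsto\phi'$, while tracking the induced row and column permutations at each step gives the permutations $\rho$ and $\sigma$ required in the second bullet of the statement.

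The main obstacle is the construction of the local bijection $\psi$: it must preserve NE- and SE-chain counts of \emph{every} length (not just a fixed $k$), and must simultaneously induce a permutation on the row and column sums. This is the step where the $\ds$-free assumption is used critically, as it rules out exactly those adjacent column configurations for which a naive exchange would fail to preserve some chain or some marginal sum. A careful case analysis on the overlap pattern of the two affected columns, possibly combined with a small growth-diagram computation restricted to the changed cells, should give the required bijection.
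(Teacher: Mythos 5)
Your reduction rests on a structural claim that is false: $\ds$-freeness does \emph{not} make column incomparabilities local or rare. Consider the connected staircase whose $i$-th column occupies rows $\{i,i+1\}$ for $i=1,\dots,n$: it is a $\ds$-free skew shape in which \emph{every} pair of columns is incomparable and columns $i$ and $i+2$ have disjoint row supports, so incomparability is a global phenomenon that cannot be repaired by modifying one or two adjacent columns. Already for $n=2$ (cells $(1,1),(1,2),(2,2),(2,3)$) your first elementary operation cannot exist: the all-ones filling has row sums $(1,2,1)$, column sums $(2,2)$ and no $2$-chain of either kind, and a short case analysis shows that no intersection-free shape on two comparable columns admits a filling with these marginals (up to permutation) and no $2$-chain. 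Two further problems: a connected intersection-free skew shape is a moon polyomino, not in general a Ferrers shape, so Krattenthaler's theorem would not apply at your endpoint; and the local bijections $\psi$, which must simultaneously preserve the longest NE-chain and the longest SE-chain across a change of shape while permuting row and column sums, are postulated rather than constructed --- this is essentially the full difficulty of the theorem, and the paper's own observation that $\Tr(\ds,\iota_2)=2\neq 1=\Tr(\ds,\delta_2)$ shows how easily such preservation fails under innocent-looking shape relationships.

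What $\ds$-freeness actually buys, and what the paper proves as its key structural lemma, is a \emph{global} decomposition of a connected $\ds$-free skew shape into a staircase concatenation $F_1|^v G_1|^h F_2|^v\cdots|^v G_n$ of NW and SE Ferrers shapes. The bijection is then built piecewise: inside each Ferrers piece one applies a map $\Gamma$ obtained from Rubey's column-permutation theorem after padding with dummy cells, engineered so that it reverses the order of the ``special'' rows and columns shared between consecutive pieces. That reversal is exactly what is needed to convert a $\delta_k$-occurrence straddling two pieces into an $\iota_k$-occurrence, and it is also what produces the permutations $\rho$ and $\sigma$ of the second bullet. Your plan contains no mechanism for chains that cross between two Ferrers pieces, which is where the real work of the proof lies; I would recommend replacing the intersection-freeness reduction by this decomposition.
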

The proof of this result is presented in Section~\ref{sec-sskew}.  A direct consequence of the 
theorem is the following identity for transversals.

\begin{corollary}\label{cor-sskew}
For any $\ds$-free skew shape $S$ and any $k\ge 1$, we have $\Tr(S,\delta_k)=\Tr(S,\iota_k)$.
\end{corollary}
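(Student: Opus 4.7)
The corollary should follow almost immediately from Theorem~\ref{thm-sskew}; the plan is to show that the bijection supplied by the theorem restricts to a bijection on transversal fillings, after which the first bullet of the theorem finishes the job.

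First I would recall that a transversal filling of $S$ is precisely a nonnegative integer filling in which every row sum and every column sum equals $1$. Given an arbitrary transversal $\phi$ of $S$, I would apply Theorem~\ref{thm-sskew} to produce the filling $\phi'$ of $S$. By the second bullet of the theorem, there exist permutations $\rho$ and $\sigma$ of the row- and column-indices of $S$ such that the $i$-th row sum of $\phi$ equals the $\rho_i$-th row sum of $\phi'$, and similarly for columns via $\sigma$. Since every row and column sum of $\phi$ equals $1$, the same holds for $\phi'$. Because $\phi'$ takes values in the nonnegative integers, row sums equal to $1$ force every entry to be $0$ or $1$ with exactly one $1$ per row, and the analogous column condition confirms that $\phi'$ is itself a transversal of $S$.

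Therefore the map $\phi\mapsto\phi'$ restricts to a map from transversals of $S$ to transversals of $S$, and since Theorem~\ref{thm-sskew} asserts the map is a bijection on all integer fillings of $S$, its inverse also preserves the property of being a transversal, by the symmetric argument. Hence the restriction is a bijection on the set of transversal fillings of $S$. The first bullet of Theorem~\ref{thm-sskew} now guarantees that under this restricted bijection, $\phi$ avoids $\delta_k$ if and only if $\phi'$ avoids $\iota_k$, so the $\delta_k$-avoiding transversals of $S$ are in bijection with the $\iota_k$-avoiding transversals of $S$, yielding $\Tr(S,\delta_k)=\Tr(S,\iota_k)$.

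There is no real obstacle here: the whole content of the corollary has been absorbed into the carefully stated second bullet of Theorem~\ref{thm-sskew}, whose purpose is precisely to allow the transversal structure to be transported along the bijection. The only mildly nontrivial point worth stating explicitly is the deduction that $\phi'$ is again $0/1$-valued, which uses the nonnegativity of the entries together with the unit row sums.
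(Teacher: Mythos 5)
Your proposal is correct and follows exactly the route the paper intends: the paper simply declares the corollary a direct consequence of Theorem~\ref{thm-sskew}, and your argument supplies the routine verification that the row- and column-sum permutation property forces the bijection to restrict to transversals, after which the avoidance equivalence gives the count.
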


Theorem~\ref{thm-sskew} cannot in general be extended to non-$\ds$-free skew shapes. In fact, 
for the shape $\ds$ itself, we easily verify that $\Tr(\ds,\iota_2)=2$ and $\Tr(\ds,\delta_2)=1$. 
However, computational evidence suggests the following conjecture. 

\begin{conjecture}\label{con-skew}[Skew shape conjecture]
For any skew shape $S$ and any $k\ge 1$, we have $\Tr(S,\iota_k)\ge\Tr(S,\delta_k)$.
\end{conjecture}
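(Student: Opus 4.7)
My plan is to strengthen the conjecture to a bijective statement analogous to the paper's second main theorem and deduce the inequality by inclusion. Specifically, I would try to identify an explicit family $\mathcal{P}_k$ of obstruction patterns on skew subshapes such that, for every skew shape $S$, the $\delta_k$-avoiding transversals of $S$ are in bijection with the transversals that avoid both $\iota_k$ and every pattern in $\mathcal{P}_k$. For $k=2$, this is exactly the second main theorem of the paper with $\mathcal{P}_2=\{\fd\}$, and the inclusion of $\{\iota_k\}\cup\mathcal{P}_k$-avoiders into $\iota_k$-avoiders would then immediately yield the conjecture. The first concrete task is to find the right $\mathcal{P}_k$ for $k\ge 3$, presumably via exhaustive search on small skew shapes; natural candidates are fillings of $\ds$-like dented shapes carrying a NE-chain of length $k-1$ together with one extra 1-cell placed inside the dent.

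Once $\mathcal{P}_k$ is fixed, I would build the bijection inductively on the number of $\ds$-occurrences in $S$. The base case of $\ds$-free shapes is handled by Theorem~\ref{thm-sskew} and Corollary~\ref{cor-sskew}. For the inductive step, I would pick a canonical $\ds$-occurrence $D$ in $S$ (for instance, the one whose top-left cell is lexicographically smallest), partition the transversals of $S$ according to the pattern of 1-cells in and around $D$, and design a local rearrangement that ``heals'' the dent by modifying $S$ to a slightly smaller skew shape $S^*$ with strictly fewer $\ds$-occurrences. The rearrangement should be designed so that $\delta_k$-chains crossing $D$ are traded either for $\iota_k$-chains or for occurrences of patterns in $\mathcal{P}_k$ on the other side; chains avoiding $D$ should be preserved verbatim. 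Closing the induction with Theorem~\ref{thm-sskew} at the base would yield the stronger bijection, and hence the conjecture.

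The main obstacle is that $\iota_k$- and $\delta_k$-avoidance are global conditions whereas $\ds$ is a local one, so a rearrangement near a dent can create or destroy chains arbitrarily far away in $S$; the bijection has to account for these long-range effects simultaneously. Consequently, the correct $\mathcal{P}_k$ may need to be a genuinely rich family of patterns on arbitrarily large dented subshapes, rather than a single pattern analogous to $\fd$, and already identifying it may be nontrivial. An alternative route would be to adapt a growth-diagram or RSK-type correspondence that sweeps cells in an order compatible with the skew-shape boundary, but even there the asymmetry between $\iota_k$ and $\delta_k$ forced by the dents (witnessed already by $\Tr(\ds,\iota_2)=2>1=\Tr(\ds,\delta_2)$) must be tracked explicitly. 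Given that the conjecture has remained open for $k\ge 3$ despite substantial work on the $k\le 2$ cases, any successful approach is likely to hinge on a genuinely new combinatorial idea rather than a direct extension of the $\ds$-free bijection of Theorem~\ref{thm-sskew}.
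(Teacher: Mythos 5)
This statement is a conjecture, not a theorem of the paper: the paper offers no proof of it for general $k$, and explicitly notes that it is only known for $k\le 2$ (via Theorem~\ref{thm-bp}, due to earlier work of Jel\'inek and of Burstein and Pantone). Your submission is likewise not a proof but a research program, and it leaves the essential mathematical content unsupplied. The framework is logically sound as far as it goes --- if one could exhibit a family $\mathcal{P}_k$ and a bijection between $\delta_k$-avoiding transversals and $\{\iota_k\}\cup\mathcal{P}_k$-avoiding transversals, the inequality $\Tr(S,\iota_k)\ge\Tr(S,\delta_k)$ would indeed follow by inclusion, and the $k=2$ instance with $\mathcal{P}_2=\{\fd\}$ is exactly Theorem~\ref{thm-bp}. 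But nothing beyond the framework is established: $\mathcal{P}_k$ is never defined for $k\ge 3$, and there is no evidence that any finite or even structurally describable family works.

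Two steps in the sketch would need serious repair even to be well posed. First, the inductive step proposes to ``heal'' a $\ds$-occurrence by passing to a smaller skew shape $S^*$, but then the two sides of the purported bijection are fillings of \emph{different} shapes with different cell sets, so it is unclear what ``chains avoiding $D$ are preserved verbatim'' even means, and no mechanism is given for matching transversals of $S$ with transversals of $S^*$. Second, as you yourself concede, a local rearrangement near one dent can create or destroy $\iota_k$- and $\delta_k$-chains far away; this is precisely the obstruction that the paper's own techniques (Rubey's bijection on maximal rectangles for $\ds$-free shapes, and the cell-by-cell sweep of Section~\ref{sec-genskew} for $k=2$) are engineered to circumvent, and your proposal offers no substitute mechanism. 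In short: the approach is a reasonable way to \emph{think about} the conjecture, and it correctly situates Theorems~\ref{thm-sskew} and~\ref{thm-bp} as the known special cases, but it does not constitute a proof, and the conjecture remains open for $k\ge 3$.
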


The conjecture is trivial for $k=1$ and is known to be true for $k=2$. Indeed, for the case $k=2$, 
a more precise result is known, proven by Jelínek~\cite[Lemmas 29 and 30]{phd} and independently by 
Burstein and Pantone~\cite[Lemmas 1.4 and 1.5]{bp}.

\begin{theorem}[\cite{bp,phd}]\label{thm-bp}
For any skew shape $S$ that admits at least one transversal, there is exactly one transversal of $S$ 
that avoids $\delta_2$, and exactly one transversal of $S$ that simultaneously avoids both 
$\iota_2$ and the filling $\fd$ from Figure~\ref{fig-D}.
\end{theorem}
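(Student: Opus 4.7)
The plan is to treat the two uniqueness claims separately, each via an explicit construction of the required transversal plus a leftmost-column-of-disagreement uniqueness argument; the geometric heavy lifting in both is done by Observation~\ref{obs-skew}.

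For the $\delta_2$-avoiding transversal $T_\delta$, I would define $T_\delta$ greedily: process the columns of $S$ from left to right and, in each column, place a 1 in the lowest row not yet used. Since each column of a skew shape is a contiguous range of rows whose bottom and top are both non-decreasing from left to right (a direct consequence of Observation~\ref{obs-skew}), a short induction on the number of columns — peeling off column 1 and row $r^*_1$, and verifying via a one-step swap justified by Observation~\ref{obs-skew} that the residual skew shape still admits a transversal — shows that the greedy never gets stuck when $S$ admits a transversal at all. The resulting $T_\delta$ avoids $\delta_2$: a would-be $\delta_2$-occurrence at 1-cells $(i,r),(i',r')$ with $i<i'$, $r>r'$, and full spanning square in $S$ would contradict the greedy's preference for the lower row $r'$ at column $i$, since row $r'$ is first used only at column $i'>i$. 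Uniqueness of $T_\delta$ is by leftmost column of disagreement: given any other $\delta_2$-avoiding transversal $T$ and the first column $i$ where it differs from $T_\delta$, writing $a<b$ for the rows used at column $i$ by $T_\delta$ and $T$ respectively, the row $a$ must reappear in $T$ at some column $k>i$, yielding 1-cells $(i,b),(k,a)$ of $T$. Observation~\ref{obs-skew} then fills the rectangle $\{i,k\}\times\{a,b\}$ entirely into $S$, making this pair a genuine $\delta_2$-occurrence in $T$, a contradiction.

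For the $\{\iota_2,\fd\}$-avoiding transversal $T_\iota$, the uniqueness argument is directly parallel. If $T$ is any other $\{\iota_2,\fd\}$-avoiding transversal and $i$ is the leftmost column of disagreement, one extracts the NE pair of 1-cells $(i,a),(k,b)$ of $T$ with $i<k$ and $a<b$, where $b$ is the row used by $T_\iota$ at column $i$. Either the spanning rectangle $\{i,k\}\times\{a,b\}$ lies fully in $S$, yielding a genuine $\iota_2$-occurrence and a contradiction; or, since $(i,a),(i,b),(k,b)$ are all in $S$ (the first by $T$, the second by $T_\iota$, the third by $T$), the only missing corner is $(k,a)$. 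Combined with whatever additional skew-shape cells around $i,k$ and $a,b$ are forced to be in $S$ by Observation~\ref{obs-skew} applied to neighbouring 1-cells, this configuration can — by inspection of Figure~\ref{fig-D} — be identified as an occurrence of $\fd$ in $T$, contradicting $\fd$-avoidance.

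The main obstacle in this plan is the \emph{existence} of $T_\iota$: unlike in the $\delta_2$ case, the naive dual greedy (topmost unused row per column, left to right) can already get stuck on small skew shapes such as $\ds$ itself, because on $\ds$ there is no $\iota_2$-avoiding transversal whose column-$1$ entry is the topmost cell. A more delicate construction is therefore required — a natural route is to start from an arbitrary $\iota_2$-avoiding transversal (whose existence in $S$ one can obtain by applying the greedy procedure above in a suitably chosen direction and then locally repairing stuck points) and iteratively eliminate $\fd$-occurrences via local swaps, with termination controlled by a monovariant such as a lexicographic order on the row positions. The other, less severe, technical step is matching the missing-corner configuration produced by the uniqueness swap against the precise pattern $\fd$ drawn in Figure~\ref{fig-D}; this is a case analysis once $\ds$ is fixed on the page.
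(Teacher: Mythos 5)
First, a point of calibration: the paper does not prove Theorem~\ref{thm-bp} at all --- it is imported wholesale from \cite{bp,phd} --- so there is no in-paper argument to measure yours against; it has to stand on its own. The $\delta_2$ half of your proposal does stand, and can even be compressed: by Observation~\ref{obs-skew}, any two 1-cells of a filling of a skew shape in ``descending'' position automatically span a full rectangle of $S$ and hence already form an occurrence of $\delta_2$, so a $\delta_2$-avoiding transversal must be an increasing sequence, which forces the $i$-th column's 1-cell into the $i$-th row; a short starvation argument (if column $i$ did not reach row $i$, then either rows $1,\dots,i$ would be confined to fewer than $i$ columns or columns $1,\dots,i$ to fewer than $i$ rows) shows every diagonal cell lies in $S$ whenever $S$ admits a transversal. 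Your greedy and leftmost-disagreement argument are a correct, if slightly roundabout, version of this.

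The $\{\iota_2,\fd\}$ half has two genuine gaps. The decisive one is \emph{existence} of the $\{\iota_2,\fd\}$-avoiding transversal, which you yourself flag as ``the main obstacle'' and then only gesture at: ``locally repairing stuck points,'' ``iteratively eliminate $\fd$-occurrences via local swaps,'' ``a monovariant such as a lexicographic order.'' None of this is carried out --- no construction is exhibited, it is not shown that the swaps preserve $\iota_2$-avoidance and the transversal property, and the monovariant is not defined --- yet this existence statement is the substantive content of the claim, as your own example of the naive greedy failing shows. The uniqueness argument is also broken as written: an occurrence of $\fd$ consists of \emph{three} 1-cells placed in the seven-cell shape $\ds$, but your disagreement argument produces only the two 1-cells $(i,a)$ and $(k,b)$ together with the missing corner $(k,a)\notin S$. ``Whatever additional cells are forced by Observation~\ref{obs-skew}'' cannot conjure the required third 1-cell; one must actually locate a 1-cell of $T$ in a column strictly between $i$ and $k$ and a row strictly above $b$ whose row is cut off from column $i$, and verifying that such a cell exists (or finding a different contradiction when it does not) is precisely the nontrivial case analysis the cited proofs perform. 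Finally, the inequality $a<b$ you assert at the first column of disagreement is not automatic: it would have to follow from properties of the still-undefined construction of $T_\iota$, or else the case $a>b$ must be handled separately (a descending pair is harmless here, since $T$ is not assumed to avoid $\delta_2$).
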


Note that this theorem implies not only the case $k=2$ of the skew shape conjecture, but also the 
case $k=2$ of Corollary~\ref{cor-sskew}. 

As our second main result, we will extend Theorem~\ref{thm-bp} to an identity for general (i.e., not 
necessarily transversal) binary fillings. 

\begin{theorem}\label{thm-genskew}
For any skew shape $S$, the number of binary fillings of $S$ that avoid $\delta_2$ is the same as 
the number of binary fillings of $S$ that avoid both $\iota_2$ and~$\fd$.  Moreover, this identity is 
witnessed by a bijection that preserves the number of 1-cells in every row.
\end{theorem}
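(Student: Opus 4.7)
The plan is to construct an explicit bijection $\Phi$ between $\delta_2$-avoiding binary fillings of $S$ and $\{\iota_2,\fd\}$-avoiding binary fillings of $S$, proceeding by induction on the number of rows of $S$. In the inductive step I would peel off the bottom row $R$ of $S$ (which, since $S$ is a skew shape, is a contiguous interval of cells), apply the inductive hypothesis to the smaller skew shape $S\setminus R$ to obtain a bijection between the two classes of fillings there, and then determine how to fill $R$ in the image filling $\Phi(\phi)$ so that the row sum in $R$ is preserved and $\Phi(\phi)$ is globally $\{\iota_2,\fd\}$-avoiding.

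For the local step in $R$, I would compare the admissibility conditions on placements of $1$-cells under the two avoidance constraints. In a $\delta_2$-avoiding filling $\phi$, the $1$-cells in $R$ must lie at columns weakly smaller than every $1$-cell column strictly above $R$, so the number of admissible placements of the prescribed number of cells in $R$ is controlled by a single parameter, namely the leftmost occupied column of $\phi|_{S\setminus R}$. In a $\{\iota_2,\fd\}$-avoiding filling the admissibility is more intricate: a candidate $1$-cell at column $c\in R$ is forbidden if it closes an $\iota_2$-chain with some upper $1$-cell over a complete $2\times 2$ sub-box of $S$, or if together with an upper $1$-cell it realises $\fd$ on a $\ds$-subshape straddling $R$. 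My plan is to show that, for every fixed filling of $S\setminus R$ and its image under the inductive bijection, the two sets of admissible placements in $R$ have the same cardinality, and to match them by a canonical order-preserving bijection on columns, thereby extending the inductive bijection to all of $S$.

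The main obstacle will be establishing the equality of these local admissibility counts, which is precisely what the $\fd$-pattern is designed to enforce: for $\ds$-free skew shapes Theorem~\ref{thm-sskew} already yields the bijection and $\fd$-avoidance is vacuous, so the difficulty is concentrated in the $\ds$-subshapes that straddle the bottom row. I expect to have to strengthen the inductive hypothesis so that the bijection on $S\setminus R$ tracks enough additional boundary data to pin down the admissibility condition on $R$ (for instance, the extreme $1$-cell columns in each row of the image filling, or some finer invariant near the lower-right boundary). An alternative strategy would be to induct on a ``$\ds$-complexity'' parameter of $S$ rather than on rows, taking $\ds$-free skew shapes as the base case (handled by Theorem~\ref{thm-sskew}) and designing a local move that resolves one $\ds$-subshape at a time; the hard part there would be to verify that the move is bijective and composes consistently across multiple resolutions, with the $\fd$-correction accounting precisely for each resolution step.
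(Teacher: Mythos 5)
Your proposal is a plan rather than a proof, and the plan has a genuine gap at its center. The row-peeling induction requires, for each filling $\phi$ of $S\setminus R$ and its image $\phi'$ under the inductive bijection, that the set of columns of $R$ in which a $1$-cell may be placed without creating $\delta_2$ (for $\phi$) has the same cardinality as the set of columns in which a $1$-cell may be placed without creating $\iota_2$ or $\fd$ (for $\phi'$). On the $\delta_2$ side this set is a prefix of $R$ determined by the leftmost $1$-cell column above, as you say; but on the other side the admissible set depends on $\iota_2$-threats from every $1$-cell above whose column reaches down to $R$, and on $\fd$-threats coming from pairs of $1$-cells sitting over a $\ds$-occurrence that straddles $R$ --- so it depends on fine positional data throughout $\phi'$, not on any single boundary statistic. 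A bijection on $S\setminus R$ that merely preserves row sums cannot control this, and you have not formulated (let alone established) a strengthened inductive hypothesis that would. Worse, the needed invariant is not intrinsic to $S\setminus R$: which columns of $S\setminus R$ extend down into $R$, and which triples of rows and columns form a $\ds$-subshape with the bottom row, depend on how $S$ continues below, so the induction would have to be quantified over all admissible downward extensions of the shape. This is the entire content of the theorem, and it is left as ``my plan is to show''. Your fallback via Theorem~\ref{thm-sskew} has the same character: the base case is plausible (for $\ds$-free shapes $\fd$-avoidance is vacuous), but the ``local move resolving one $\ds$-subshape at a time'' is not defined, and you flag yourself that its bijectivity and composability are unverified.

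For contrast, the paper avoids inducting on the shape altogether. It fixes $S$, orders its cells $c_1,\dots,c_N$ bottom-to-top, and interpolates between the two pattern classes: $\mathcal{G}_i$ consists of fillings with no $i$-high occurrence of $\delta_2$ and no $i$-low occurrence of $\iota_2$ or $\fd$, where high/low refers to the position of the top-right cell of the occurrence relative to a sweep line through $c_i$. Then $\mathcal{G}_1$ and $\mathcal{G}_N$ are the two classes in the theorem, and the whole work is in a single local step $\mathcal{G}_i\to\mathcal{G}_{i+1}$ (Lemma~\ref{lemma_gi}): both sets are split into five subsets according to the contents of the maximal rectangle with top-right corner $c_{i+1}$, and an explicit row-sum-preserving rearrangement of that rectangle matches them up. The hybrid avoidance classes $\mathcal{G}_i$ are precisely the device that localizes the difficulty you correctly identified but could not resolve; without some analogue of them, I do not see how your induction closes.
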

The proof of this result appears in Section~\ref{sec-genskew}. We note that the original approach 
used to prove Theorem~\ref{thm-bp} does not seem to be applicable to non-transversal fillings. 
Instead, our proof of Theorem~\ref{thm-genskew} is based on a new bijective argument.

\section{Proof of Theorem~\texorpdfstring{\ref{thm-sskew}}{2.2}}\label{sec-sskew}

\subsection{The structure of \texorpdfstring{$\ds$}{dented-shape}-free skew shapes}
An important feature of $\ds$-free skew shapes is that they admit a natural decomposition into 
a concatenation of Ferrers diagrams. Before describing this decomposition properly, we need some 
preparation.

Let $S$ be a shape, and let $(i,j)$ be a cell of $S$. We let $S[\le i, \le j]$ denote the subshape 
of $S$ formed by those cells $(i',j')$ of $S$ that satisfy $i'\le i$ and $j'\le j$. We also use the 
analogous notations $S[\ge i, \le j]$, $S[\le i, \ge j]$, and $S[\ge i, \ge j]$, with obvious 
meanings.

\begin{lemma} \label{lemma_forb_rec}
A skew shape $S$ is $\ds$-free if and only if for every cell $(i,j)$ of $S$ at least one
of the two subshapes $S[\le i, \ge j]$ and $S[\ge i, \le j]$ is a rectangle.
\end{lemma}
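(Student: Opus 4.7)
The plan is to prove each implication by contrapositive. In the easy direction, I assume $S$ contains an occurrence of $\ds$ at rows $r_1<r_2<r_3$ and columns $c_1<c_2<c_3$, so that the two absent corners of $\ds$ yield $(c_1,r_3)\notin S$ and $(c_3,r_1)\notin S$, while the other seven positions of the spanned $3\times 3$ block lie in $S$. I focus on the central cell $(c_2,r_2)\in S$: the subshape $S[\le c_2,\ge r_2]$ contains $(c_1,r_2)$, $(c_2,r_2)$ and $(c_2,r_3)$ but is missing $(c_1,r_3)$, so it cannot be a rectangle; symmetrically, $S[\ge c_2,\le r_2]$ contains $(c_2,r_1)$, $(c_2,r_2)$ and $(c_3,r_2)$ but is missing $(c_3,r_1)$, so it too fails to be a rectangle. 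This is exactly the failure of the structural condition at the cell $(c_2,r_2)$.

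For the other direction I begin with a cell $(i,j)\in S$ for which both $S[\le i,\ge j]$ and $S[\ge i,\le j]$ are non-rectangles, and aim to construct an explicit occurrence of $\ds$. The main preparatory observation, extracted directly from Observation~\ref{obs-skew}, is that the horizontal extents of non-empty rows of $S$ move weakly to the right as the row index increases: if $r<r'$ are non-empty rows, then both the leftmost and the rightmost column of row $r$ are at most the leftmost and the rightmost column of row $r'$ (otherwise Observation~\ref{obs-skew} would force a cell lying outside the extent of one of the rows). Writing $B$ and $T$ for the bottom and top of column $i$ in $S$, convexity gives $(i,r)\in S$ for every $r\in[B,T]$, and together with the monotonicity this forces $S[\le i,\ge j]$ to consist precisely of the rows $r\in[j,T]$, each truncated to the segment from its left end up to column $i$; similarly, $S[\ge i,\le j]$ consists of the rows $r\in[B,j]$, each taken from column $i$ to the row's right end. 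Hence $S[\le i,\ge j]$ is a rectangle iff the leftmost column is constant for $r\in[j,T]$, and $S[\ge i,\le j]$ is a rectangle iff the rightmost column is constant for $r\in[B,j]$.

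Since neither is constant by assumption, monotonicity delivers rows $r_3\in(j,T]$ and $r_1\in[B,j)$ such that row $r_3$ starts strictly to the right of row $j$ and row $r_1$ ends strictly to the left of row $j$. I then set $c_1$ to be the left end of row $j$, $c_3$ the right end of row $j$, $c_2=i$ and $r_2=j$. Using that the left end of row $r_3$ is at most $i$ and the right end of row $r_1$ is at least $i$, the inequalities $c_1<c_2<c_3$ and $r_1<r_2<r_3$ follow immediately. The two absent cells $(c_1,r_3)$ and $(c_3,r_1)$ of the $\ds$ pattern come directly from the strict inequalities used to pick $r_3$ and $r_1$, while the remaining seven cells of the $3\times 3$ block lie in $S$ by direct inspection from row-extent monotonicity together with the facts that row $j$ spans $c_1$ through $c_3$ and column $i$ spans $r_1$ through $r_3$. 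This exhibits $\ds$ as a subshape of $S$.

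The main obstacle I anticipate is the second direction, specifically the careful choice of $r_1,r_3$ and the verification of all nine cells of the $3\times 3$ pattern; once the row-extent monotonicity derived from Observation~\ref{obs-skew} is in hand, each individual verification is a short deduction.
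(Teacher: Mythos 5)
Your proof is correct and follows essentially the same route as the paper's: both directions are argued by contraposition, the easy direction is identical, and in the converse direction both proofs build the occurrence of $\ds$ on the columns $(c_1,i,c_3)$ given by the two endpoints of row $j$, differing only in that the paper takes the extreme rows of column $i$ as the outer rows while you take arbitrary rows witnessing the change of the left/right row-extents. The monotonicity of row extents that you extract explicitly from Observation~\ref{obs-skew} is used implicitly in the paper's terser argument, so the two proofs are minor variants of one another.
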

\begin{proof}
Suppose that a skew shape $S$ has an occurrence of $\ds$ in columns $(i_1,i_2,i_3)$ and rows 
$(j_1,j_2,j_3)$. Then $S[\le i_2,\ge j_2]$ is not a rectangle, since it contains the two cells 
$(i_1,j_2)$ and $(i_2,j_3)$ but does not contain $(i_1,j_3)$, which is not a cell of~$S$. 
Symmetrically, $S[\ge i_2,\le j_2]$ is not a rectangle either, and the right-hand side of the 
equivalence in the statement of the lemma fails for $i=i_2$ and $j=j_2$.

Conversely, suppose that $S$ contains a cell $(i,j)$ such that neither $S[\le i, \ge j]$ nor $S[\ge 
i, \le j]$ is a rectangle; see Figure~\ref{fig-forbrec}. Fix row indices $j^-$ and $j^+$ so that 
$(i,j^-)$ is the bottommost cell of $S$ in column $i$, and $(i,j^+)$ is the topmost cell of $S$ in 
column~$i$. Note that $j^-<j<j^+$: if we had, e.g., $j^-=j$, then $S[\ge i, \le j]$ would consist of 
a single row, and therefore it would be a rectangle. Fix column indices $i^-$ and $i^+$ so that 
$(i^-,j)$ and $(i^+,j)$ are the leftmost and rightmost cell of $S$ in row $j$. Again, we see that 
$i^-<i<i^+$.

\begin{figure}
 \centerline{\includegraphics[scale=0.7]{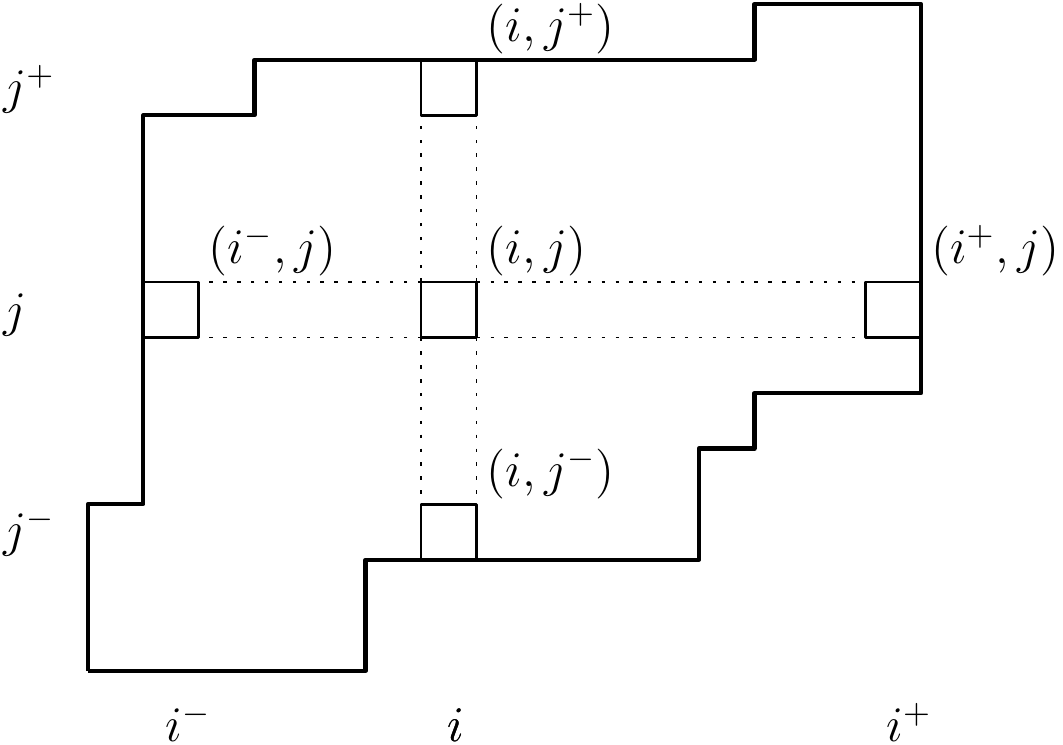}}
\caption{Finding an occurrence of $\ds$ in a skew shape}\label{fig-forbrec}
\end{figure}

Note that neither $(i^-,j^+)$ nor $(i^+,j^-)$ is a cell of $S$: if, e.g., $S$ contained the cell 
$(i^-,j^+)$, then $S[\le i, \ge j]$ would be a rectangle. On the other hand, since $S$ is a skew 
shape, it must contain both $(i^-,j^-)$ and $(i^+,j^+)$ by Observation~\ref{obs-skew}. It follows 
that the columns $(i^-,i,i^+)$ and rows $(j^-,j,j^+)$ induce an occurrence of $\ds$ in~$S$.
\end{proof}

Let $S$ be a connected skew shape, and let a vertical line placed between two adjacent columns of 
$S$ divide it into two skew shapes $S_1$ and $S_2$, where the leftmost column of $S_2$ is directly 
to the right of the rightmost column of~$S_1$, and the bottommost row of $S_2$ is at least as high 
as the bottommost cell of the rightmost column of~$S_1$. Then we say that $S$ is a \emph{vertical 
concatenation} of $S_1$ and $S_2$ and we write $S = S_1 |^v S_2$. Similarly, we define 
\emph{horizontal concatenation} and write $T = T_1 |^h T_2$.

\begin{figure}[tb]
\centering
\begin{tikzpicture}[line cap=round,line join=round,>=triangle 45,x=0.75cm,y=0.75cm]
\clip(-4.25,-4.33) rectangle (6.19,4.2);
\draw (-4,-4)-- (-4,0);
\draw (-4,0)-- (0,0);
\draw (0,0)-- (0,-2);
\draw (0,-2)-- (-1,-2);
\draw (-1,-2)-- (-1,-3);
\draw (-1,-3)-- (-2,-3);
\draw (-2,-3)-- (-2,-4);
\draw (-2,-4)-- (-4,-4);
\draw (0,0)-- (0,1);
\draw (0,1)-- (2,1);
\draw (2,1)-- (2,2);
\draw (0,-1)-- (4,-1);
\draw (2,2)-- (4,2);
\draw (4,2)-- (4,-1);
\draw (-2.8,-1.35) node[anchor=north west] {$F_1$};
\draw (1.64,0.42) node[anchor=north west] {$G_1$};
\draw (2,2)-- (5,2);
\draw (3,2)-- (3,4);
\draw (3,4)-- (6,4);
\draw (6,4)-- (6,3);
\draw (6,3)-- (5,3);
\draw (5,3)-- (5,2);
\draw (3.5,3.35) node[anchor=north west] {$F_2$};
\end{tikzpicture}
\caption{A $\ds$-free skew skew shape, with its Ferrers decomposition}\label{fig-decomp1}
\end{figure}
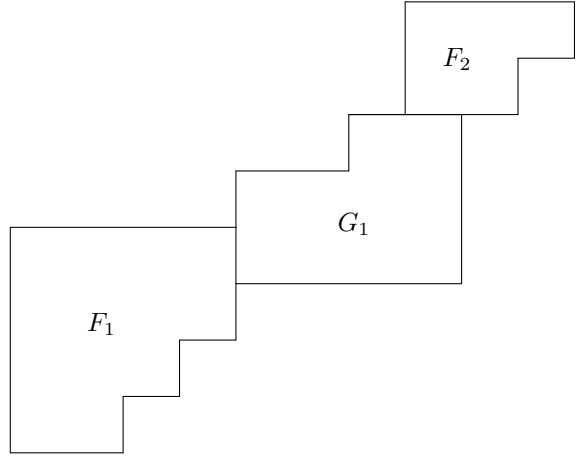

We say that a skew shape $S$ is \emph{Ferrers-decomposable} if it can be written as 
\[S = F_1 |^v G_1 |^h F_2 |^v \cdots |^h F_n |^v G_n,\]
with the following properties (see Figure~\ref{fig-decomp1}):
\begin{itemize}
\item[(a)] All $F_i$ are $NW$ Ferrers shapes, and all $G_i$ are $SE$ Ferrers shapes.
\item[(b)] All $F_i$ and $G_i$ are nonempty with the possible exception of $G_n$.
\item[(c)] For $i<n$, if the dividing line between $F_i$ and $G_i$ is between columns $c_i$ and 
$c_i+1$, then the topmost cell of $S$ in column $c_i$ is below the topmost cell of $S$ in column 
$c_i+1$. Similarly, if the dividing line between $G_i$ and $F_{i+1}$ is between rows $r_i$ and 
$r_i+1$, then the rightmost cell of $S$ in row $r_i$ is to the left of the rightmost cell of $S$ in 
row $r_i+1$.
\item[(d)] For $i<n$, $G_i$ is to the left of the vertical line separating $F_{i+1}$ from 
$G_{i+1}$, and $F_i$ is below the horizontal line separating $G_i$ from~$F_{i+1}$. 
\end{itemize}
We call this representation of $S$ the \emph{Ferrers decomposition} of~$S$. We remark that the 
Ferrers decomposition bears a certain similarity to the concept of staircase decomposition, 
introduced in the study of pattern-avoiding permutations~\cite{albert,GV}.

\begin{figure}[htb]
 \centerline{\includegraphics[scale=0.7]{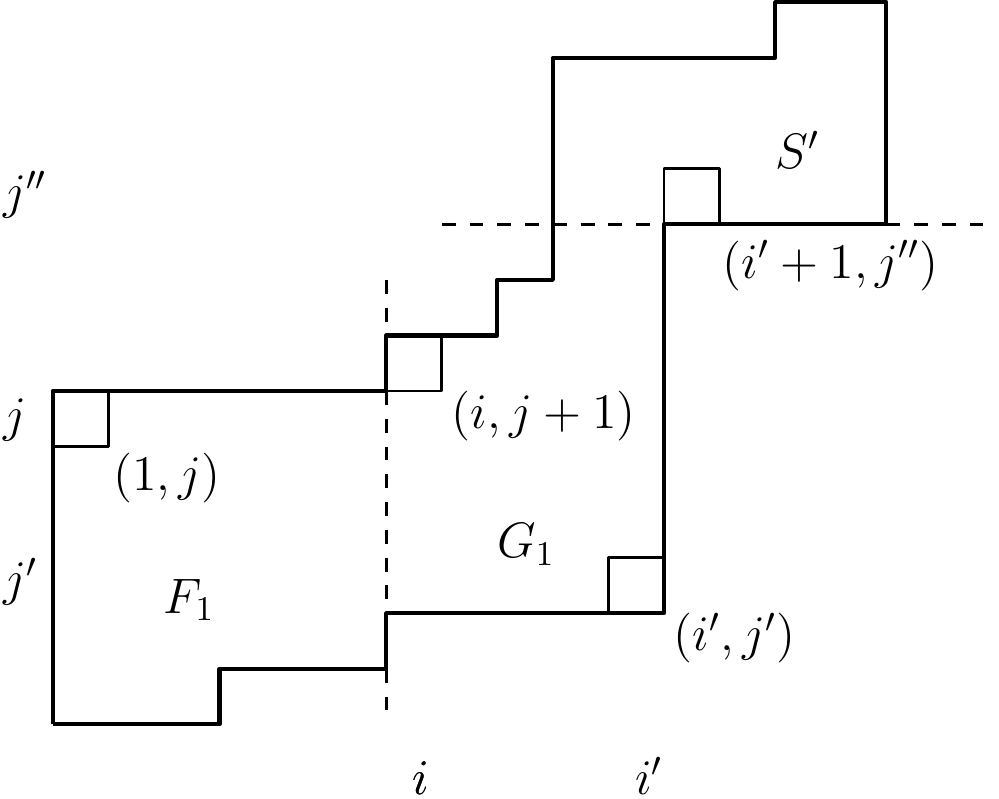}}
\caption{Decomposing a $\ds$-free skew shape into Ferrers diagrams}\label{fig-decomp}
\end{figure}

\begin{lemma} \label{lemma_key}
A connected skew shape is $\ds$-free if and only if it is Ferrers-decomposable.
\end{lemma}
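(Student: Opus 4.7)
The plan is to prove both implications of the equivalence, with the forward direction (connected $\ds$-free $\Rightarrow$ Ferrers-decomposable) requiring significantly more work than the reverse.

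For the reverse direction, I would apply Lemma~\ref{lemma_forb_rec}: given any cell $(i,j)$ of $S$, it suffices to show that at least one of $S[\le i, \ge j]$ and $S[\ge i, \le j]$ is a rectangle. The argument splits into two symmetric cases according to whether $(i,j)$ lies in some $F_k$ or in some $G_k$. If $(i,j) \in F_k$, then the NW-Ferrers structure of $F_k$ guarantees that the cells of $F_k$ weakly right of column $i$ and weakly below row $j$ form a rectangle, while conditions (c) and (d) of the decomposition ensure that no cell from a different block contributes to $S[\ge i, \le j]$: blocks with smaller index sit strictly below the horizontal line separating $F_k$ from the block immediately beneath it, and blocks with larger index sit strictly above or strictly to the right of $F_k$'s bounding box in a way that excludes their contribution to $S[\ge i, \le j]$. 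Hence $S[\ge i, \le j]$ is a rectangle. The case $(i,j) \in G_k$ is handled symmetrically, yielding that $S[\le i, \ge j]$ is a rectangle.

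For the forward direction, I plan to argue by induction on the number of columns of $S$. The base case of a single column is immediate, since $S$ is then a single rectangle which is a NW Ferrers shape, so $S=F_1$ with $G_1$ empty. For the inductive step, let $h^+$ denote the top row of column $1$, and let $c_1$ be the largest column index such that every column $1 \le i \le c_1$ has its topmost cell in row $h^+$; define $F_1$ as the subshape of $S$ consisting of the cells in columns $1$ through $c_1$. Top-justification of $F_1$ is automatic by construction; left-justification and nonincreasing column heights within $F_1$ would follow from Observation~\ref{obs-skew} combined with Lemma~\ref{lemma_forb_rec}, both of which rule out the configurations that would prevent $F_1$ from being a NW Ferrers shape. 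Next, I would identify $G_1$ as the maximal SE Ferrers pocket directly to the right of $F_1$ by tracking the bottom profile of $S$ from column $c_1+1$ onwards up to the horizontal level at which the top boundary of $S$ makes its next jump, again invoking Lemma~\ref{lemma_forb_rec} at the bottom-rightmost cells of the candidate columns to certify that $G_1$ is bottom-right justified, convex and intersection-free. After removing $F_1 \cup G_1$, the remainder is a connected $\ds$-free skew shape with strictly fewer columns; the inductive hypothesis then supplies a decomposition $F_2 |^v G_2 |^h \cdots |^v G_n$, and concatenating yields the desired decomposition of $S$. Conditions (a)--(d) will follow directly from this construction and from the strictness built into the choice of $c_1$ and the analogous row index separating $G_1$ from $F_2$.

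The main obstacle will be the careful verification that the candidate blocks $F_1$ and $G_1$ genuinely satisfy the NW- and SE-Ferrers properties and that the cuts between blocks satisfy the strict inequality conditions (c). Any violation would correspond to a specific bad configuration of cells which, via the skew-shape structure (Observation~\ref{obs-skew}) together with Lemma~\ref{lemma_forb_rec} applied at the appropriate corner cell, can be shown to force an occurrence of $\ds$, contradicting the hypothesis. Thus Lemma~\ref{lemma_forb_rec} will serve as the central technical workhorse throughout: I would invoke it at carefully chosen corner cells within $F_1$ and $G_1$ to enforce the required rectangular structure, and at the interfaces between consecutive blocks to verify conditions (c) and (d).
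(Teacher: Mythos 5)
Your proposal follows essentially the same route as the paper: the reverse direction reduces to Lemma~\ref{lemma_forb_rec} applied cell by cell (each cell lies in a NW or SE Ferrers block), and the forward direction peels off the leftmost NW Ferrers block $F_1$ and the adjacent SE Ferrers pocket $G_1$ and recurses, with Lemma~\ref{lemma_forb_rec} invoked at the key corner cells to certify the rectangular structure and the conditions (a)--(d). The differences are cosmetic (induction on the number of columns rather than cells), though when you fill in the details note that the horizontal cut between $G_1$ and $F_2$ should sit just below the bottommost cell of the first column to the right of $G_1$, i.e., at a jump of the \emph{bottom} boundary, rather than ``at the horizontal level at which the top boundary of $S$ makes its next jump'' as your sketch suggests.
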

\begin{proof}
In a Ferrers-decomposable skew shape $S$, every cell is in either a $NW$ Ferrers subshape or a $SE$ 
Ferrers subshape and thus satisfies the condition of Lemma~\ref{lemma_forb_rec}. Therefore $S$ is 
$\ds$-free. 

To prove the converse, let $S$ be a connected $\ds$-free skew shape; see Figure~\ref{fig-decomp}. 
We proceed by induction over the number of cells of $S$, noting that the statement is trivial if $S$ 
has a single cell. Let $(1,j)$ be the topmost cell in the first column of $S$. If $j$ is the topmost 
row of $S$, then $S$ is a NW Ferrers shape, and it admits the trivial Ferrers decomposition 
$S=F_1$. 

Suppose that $j$ is not the topmost row of $S$. Since $S$ is connected, it contains a cell in row 
$j+1$, and let $(i,j+1)$ be the leftmost such cell. We define $F_1$ to be the subshape of $S$ 
formed by the cells in the first $i-1$ columns. Clearly, $F_1$ is a nonempty NW Ferrers shape. 
Since $S$ is connected, the cell $(i,j)$ is in $S$. Applying Lemma~\ref{lemma_forb_rec} to this 
cell, we conclude that $S[\ge i,\le j]$ is a rectangle -- note that $S[\le i, \ge j]$ cannot be a 
rectangle, since $S$ contains the cells $(i-1,j)$ and $(i,j+1)$ but not $(i-1,j+1)$.

Let $(i',j')$ be the bottom-right cell of the rectangle $S[\ge i,\le j]$. If $i'$ is the rightmost 
column of $S$, then $S\setminus F_1$ is a SE Ferrers shape $G_1$, and we have a Ferrers 
decomposition $S=F_1|^v G_2$. Suppose that $i'$ is not the rightmost column, and let $(i'+1,j'')$ 
be the bottommost cell of $S$ in column $i'+1$. Note that $j''>j$, otherwise $S[\ge i,\le j]$ would 
not be a rectangle. We then let $G_1$ be the subshape of $S$ formed by the cells in columns 
$i,i+1,\dotsc,i'$ and rows $j',\dotsc,j''-1$. $G_1$ is then a SE Ferrers shape, and $S$ can be 
written as $S=F_1|^v G_1 |^h S'$, where $S'$ is a connected $\ds$-free skew shape. 

By induction, $S'$ admits a Ferrers decomposition $S'=F_2|^v G_2|^h\dotsb|^v G_n$. We claim that 
the expression $F_1 |^v G_1|^h F_2|^v G_2|^h\dotsb|^v G_n$ is a Ferrers decomposition of $S$. 
Clearly, it 
satisfies conditions (a), (b) and (c) in the definition of Ferrers decomposition. To verify 
condition (d), we only need to argue that the dividing line between $F_2$ and $G_2$ is to the right 
of~$G_1$. To see this, we note that by Lemma~\ref{lemma_forb_rec}, $S[\le i',\ge j'']$ is a 
rectangle, and in particular, the topmost cells in the columns intersecting this rectangle are all 
in the same row. Thus, by condition (c) of Ferrers decomposition, this rectangle cannot be 
separated by a vertical dividing line, and in particular the line separating $F_2$ from $G_2$ is to 
the right of~$G_1$.
\end{proof}

\subsection{Rubey's bijection} \label{sec3}

Rubey~\cite{Rubey11} has proved several general bijective results about fillings of moon polyominoes. 
Here we formulate a part of his results which will be useful in our argument. 

Given a finite sequence $s = (s_1, s_2, \ldots, s_n)$ and a permutation $\sigma$ of length $n$,
we let $\sigma s$ be the sequence $(s_{\sigma(1)}, s_{\sigma(2)}, \ldots, s_{\sigma(n)})$. In 
addition, given a moon polyomino $M$ of width $w$ and a permutation $\pi$ of length $w$, we 
denote by $\sigma M$ the shape created by permuting the columns of $M$ according to $\pi$. A 
\emph{maximal rectangle} in a moon polyomino $M$ is a rectangle $R\subseteq M$ which is not a proper 
subset of any other rectangle contained in~$M$. Note that a moon polyomino contains a maximal 
rectangle of width $w$ if and only if it has a row with exactly $w$ cells, and this maximal rectangle 
is then unique. The analogous property holds for columns as well. In fact, if $M$ and $M'$ are moon 
polyominoes such that $M$ is obtained from $M'$ by permuting its columns, then there is a 
size-preserving correspondence between the maximal rectangles of $M$ and the maximal rectangles 
of~$M'$; see Rubey~\cite{Rubey11}.

\begin{definition}
Let $M$ be a moon polyomino of height $h$ and width $w$, let $\bm{r}$ and $\bm{c}$ be sequences of 
integers of lengths $h$ and $w$ respectively, and let $\Lambda$ be a mapping which assigns to every 
maximal rectangle $R$ in $M$ a nonnegative integer $\Lambda(R)$.
Then $\F^{NE}(M, \Lambda, \bm{r}, \bm{c})$ is the set of all integer fillings of $M$ such that
\begin{itemize}
\item the sum of the entries in the $j$-th row of $M$ is equal to $r_j$,
\item the sum of the entries in the $i$-th column of $M$ is equal to $c_i$, and
\item for every maximal rectangle $R$ of $M$, the length of the longest $NE$-chain in the filling 
of $R$ is equal to~$\Lambda(R)$.
\end{itemize}
\end{definition}

The following theorem is a special case of a result of Rubey~\cite[Theorem 5.3]{Rubey11}.

\begin{theorem}[Rubey~{\cite{Rubey11}}]\label{thm_rubey}
Let $M$ be a moon polyomino of height $h$ and width $w$, and let $\sigma$ be a permutation of its 
columns such that $\sigma M$ is again a moon polyomino. Let $\Lambda$ be a function assigning 
nonnegative integers to maximal rectangles of $M$, let $\bm{r}$ and $\bm{c}$ be nonnegative integer 
vectors of length $h$ and $w$, respectively. Then the two sets $\F^{NE}(M, \Lambda, \bm{r}, \bm{c})$ 
and $\F^{NE}(\sigma M, \Lambda, \bm{r}, \sigma\bm{c})$ have the same size, and there is an explicit 
bijection between them.
\end{theorem}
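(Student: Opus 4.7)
The plan is to build on the Fomin growth-diagram machinery, in the style of Krattenthaler's proof \cite{Krattenthaler06} for Ferrers shapes, which Rubey extends to moon polyominoes.

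First, I would reduce the statement to the case where $\sigma$ is the transposition of two adjacent columns. Since $M$ and $\sigma M$ are both moon polyominoes, the adjacent columns being swapped must be comparable, and any admissible column permutation decomposes into a sequence of such elementary transpositions. The resulting bijections compose naturally: row sums stay invariant throughout, and column sums are permuted in sync with $\sigma$. The correspondence between maximal rectangles of $M$ and those of $\sigma M$, together with the values of $\Lambda$ on them, is tracked through each elementary step.

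Second, for a single adjacent transposition of columns $c$ and $c+1$, I would attach a partition to every lattice vertex of $M$, defined inductively from the bottom and left boundary (set to~$\emptyset$) by the standard Fomin-RSK local rule, where the cell entry $\phi(i,j)$ governs how the SW, NW, SE and NE corner partitions interact. The key property I would invoke is that for every rectangle $R\subseteq M$, the length of the longest $NE$-chain in $\phi|_R$ equals the first part of the partition sitting at the NE corner of $R$. This turns the statistic $\Lambda(R)$ into the boundary data of a growth diagram.

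Third, I would carry out the swap inside the two-column vertical strip formed by $c$ and $c+1$. Because the local RSK rule is invertible, the filling inside the strip is completely determined by the partitions on its outer boundary together with the two column sums $c_c$ and $c_{c+1}$. Transposing those column sums and running the inverse rule produces a new filling $\phi'$ of $\sigma M$ inside the strip, while nothing changes outside the strip. Since the outer boundary partitions of the strip are preserved, the modified growth diagram extends consistently over the rest of $M$, and the resulting $\phi' \in \F^{NE}(\sigma M, \Lambda, \bm{r}, \sigma\bm{c})$ is by construction a bijection.

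The main obstacle is verifying that $\Lambda(R)$ is preserved for \emph{every} maximal rectangle $R$, not just those lying entirely inside or outside the two-column strip. Rectangles disjoint from one of the swapped columns are trivially unaffected; the delicate case is a maximal rectangle that contains the full two-column strip and reaches far above or below it. For these, one must show that the NE-corner partition of $R$ is invariant under the local swap inside the strip, which ultimately rests on a symmetry of the RSK growth rule under transposition of two comparable columns on the boundary. Establishing this symmetry carefully, and checking the interplay between maximal rectangles of $M$ and of $\sigma M$ under the column bijection, is the technical core of the argument.
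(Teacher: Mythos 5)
This statement is not proved in the paper at all: the authors import it verbatim as a special case of Rubey's Theorem~5.3 and use it as a black box, so there is no in-paper argument to compare yours against. What you have written is a sketch of how the result is actually established in Rubey's work (growth diagrams in the style of Krattenthaler, with the column permutation realized step by step), so the general strategy is the right one. But as a proof it has genuine gaps, the largest of which you flag yourself without closing.

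Concretely, three points need real work. First, the reduction: you need to show that whenever $M$ and $\sigma M$ are both moon polyominoes, $\sigma$ factors into adjacent transpositions such that \emph{every} intermediate column arrangement is again a moon polyomino; this is true but not automatic, since convexity constrains which adjacent swaps are admissible at each stage. Second, the statement that ``for every rectangle $R\subseteq M$ the length of the longest NE-chain in $\phi|_R$ equals the first part of the partition at the NE corner of $R$'' is not correct as written: the Greene-theorem property of growth diagrams gives the longest NE-chain of the rectangle anchored at the origin of the diagram, i.e.\ of $[1,i]\times[1,j]$, not of an arbitrary sub-rectangle with NE corner $(i,j)$. To make $\Lambda(R)$ visible as boundary data one has to run a growth diagram locally on each maximal rectangle (exploiting that maximal rectangles of a moon polyomino touch the boundary), which is exactly the extra structure Rubey builds. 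Third, and decisively, the invariance of $\Lambda(R)$ for a maximal rectangle that meets the two swapped columns but extends beyond the strip is the entire content of the theorem, and your proposal only names it as ``the technical core'' rather than proving it; the claimed ``symmetry of the RSK growth rule under transposition of two comparable columns'' is precisely the lemma that must be established, not invoked. As it stands the proposal is a correct roadmap to the literature rather than a self-contained proof.
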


\subsection{A bijection on Ferrers fillings}
Using Rubey's bijection as our main tool, we will now state and prove a technical result on 
fillings of Ferrers shapes satisfying certain restrictions.

Let $F$ be a NW Ferrers shape, and let $k$ and $\ell$ be integers. Suppose that the leftmost $k$ 
columns of $F$ all have the same length, and that the topmost $\ell$ rows of $F$ all have the same 
length. We will call the leftmost $k$ columns of $F$ \emph{special columns} and the topmost $\ell$ 
rows \emph{special rows} of~$F$. For any $i\le k$, let $C_i$ denote the rectangle formed by the 
$i$ leftmost special columns of $F$ (i.e., columns $1,\dotsc,i$), and let $C'_i$ be the rectangle 
formed by the $i$ rightmost special columns of $F$ (i.e., columns $k-i+1, k-i+2,\dotsc,k$). 
Similarly, for $j\le \ell$, $R_j$ is the rectangle formed by the $j$ topmost special rows of $F$, 
and $R'_j$ the rectangle formed by the $j$ bottommost special rows of~$F$. Additionally, for $i\le 
k$, let $c_i$ and $c'_i$ denote the $i$-th special column from the left and the $i$-th special 
column from the right, respectively (i.e., $c_i=i$ and $c'_i=k+1-i$), and for $j\le\ell$, let $r_j$ 
and $r'_j$ be the $j$-th special row from the top and the $j$-th special row from the bottom, 
respectively (i.e., $r_j=h+1-j$ and $r'_j=h-\ell+j$, where $h$ is the height of $F$).

\begin{lemma}\label{lem-ferrers}
With $F$, $k$ and $\ell$ as above, there is a bijection $\Gamma$ on the set of fillings of $F$, such 
that for any filling $\phi$ of $F$ and its image $\phi'=\Gamma(\phi)$, the following holds.
\begin{enumerate}
\item The longest SE-chain in $\phi$ has the same length as the longest NE-chain in~$\phi'$.
\item For every $i\le k$, the longest SE-chain of $\phi$ contained in $C_i$ has the same length as 
the longest NE-chain of $\phi'$ contained in~$C'_i$.
\item For every $j\le \ell$, the longest SE-chain of $\phi$ contained in $R_j$ has the same length 
as the longest NE-chain of $\phi'$ contained in~$R'_j$.
\item The entries of $\phi$ in column $c_i$ have the same sum as the entries of $\phi'$ in column 
$c'_i$, and the entries of $\phi$ in row $r_j$ have the same sum as the entries of $\phi'$ in 
row~$r'_j$. 
\item For any non-special row or column, the sum of its entries in $\phi$ is the same as the sum of 
its entries in~$\phi'$.
\end{enumerate}
\end{lemma}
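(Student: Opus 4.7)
The plan is to realize $\Gamma$ as a composition of a shape-changing horizontal reflection with two applications of Theorem~\ref{thm_rubey}: the reflection converts SE-chains into NE-chains at the cost of changing the shape, and Rubey's bijections then restore the shape to $F$ while reversing exactly the special columns and rows.

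First I would apply the horizontal flip $\phi\mapsto\phi^\leftarrow$, defined by $\phi^\leftarrow(w+1-i,j)=\phi(i,j)$. This is a bijection from fillings of $F$ to fillings of the horizontal reflection $F^\leftarrow$ (an NE Ferrers shape, hence a moon polyomino); it sends each SE-chain of $\phi$ to an NE-chain of $\phi^\leftarrow$, preserves row sums, and reverses the column-sum sequence. Next, I would invoke Theorem~\ref{thm_rubey} with a column permutation $\tau$ chosen so that $\tau F^\leftarrow=F$ and so that $\tau$ reverses the non-special columns of $F^\leftarrow$ among themselves but only translates the rightmost $k$ columns of $F^\leftarrow$ (the special ones) to the leftmost $k$ positions of $F$ in their original left-to-right order. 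Concretely, $\tau(p)=p-(w-k)$ for $p\in\{w-k+1,\ldots,w\}$ and $\tau(p)=w+1-p$ for $p\in\{1,\ldots,w-k\}$; the fact that the special columns of $F^\leftarrow$ are identical makes $\tau F^\leftarrow = F$. The effect of these two steps is a filling $\phi_2$ of $F$ whose non-special column sums agree with $\phi$, whose first $k$ column sums are reversed with respect to $\phi$, whose row sums equal those of $\phi$, and whose NE-chain length in each maximal rectangle matches the SE-chain length in the corresponding maximal rectangle of $\phi$. As a final step I would apply the row analogue of Theorem~\ref{thm_rubey} with the permutation $\rho$ reversing only the top $\ell$ rows of $F$; this is admissible because those rows are identical, and it yields the desired reversal of the top $\ell$ row sums while preserving everything previously established.

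Properties~(1), (4) and~(5) of the lemma then follow by tracking sums and maximal-rectangle chain lengths through the composition. For property~(1), I would also use the elementary observation that in a NW Ferrers shape every SE-chain automatically spans a rectangular subshape lying in $F$ (its bottom-right corner being one of the chain's 1-cells), so the global longest SE-chain length is attained within some maximal rectangle.

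The main obstacle I anticipate is verifying properties~(2) and~(3), which refer to the proper sub-rectangles $C_i\subsetneq C_k$ and $R_j\subsetneq R_\ell$, since Theorem~\ref{thm_rubey} only directly controls chain lengths in maximal rectangles. My plan here is to analyse the restricted action of Rubey's bijection on the single maximal rectangle $C_k$ (and, separately, on $R_\ell$). Inside $C_k$, the composite bijection coming from the horizontal flip and from $\tau$ acts as the standard growth-diagram / RSK-style bijection on the $k\times h$ rectangle with reversed column order, and inside $R_\ell$ the bijection coming from $\rho$ acts as the analogous bijection with reversed row order. A known feature of such growth-diagram bijections on a rectangle is that they preserve NE-chain lengths in every column-prefix (respectively row-prefix) sub-rectangle, which, combined with the left-right column reversal inside $C_k$ and the top-bottom row reversal inside $R_\ell$, yields the required correspondence between longest SE-chains of $\phi$ in $C_i$ (respectively $R_j$) and longest NE-chains of $\phi'$ in $C'_i$ (respectively $R'_j$). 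The heart of the proof will be the careful verification that Rubey's bijection, in the specific geometric settings of $C_k$ and $R_\ell$, does restrict to this standard growth-diagram bijection and retains the required prefix-preservation property.
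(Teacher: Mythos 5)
Your overall architecture --- a horizontal flip to turn SE-chains into NE-chains, followed by an application of Theorem~\ref{thm_rubey} for a column permutation and of its row analogue for the special rows --- is essentially the paper's, and it does deliver properties (1), (4) and (5). The gap is exactly where you locate it: properties (2) and (3). For $i<k$ the rectangle $C_i$ is not a maximal rectangle of $F$, so Theorem~\ref{thm_rubey} as stated says nothing about chain lengths inside it, and your proposed remedy --- that Rubey's bijection restricted to $C_k$ coincides with a ``standard growth-diagram bijection'' which preserves NE-chain lengths in every column-prefix sub-rectangle --- is an unproven strengthening of the cited theorem, not a known citable fact in the form you need. Even granting that the restriction to a maximal rectangle is a growth-diagram map, the prefix-preservation property (compatible with the specific column reversal you perform) would have to be established from scratch; as written, the heart of the argument for (2) and (3) is missing rather than merely deferred.

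The paper closes this gap with a combinatorial device you could adopt: before flipping, augment $F$ with zero-filled ``dummy'' cells, appending $i$ cells to the bottom of column $c'_i$ and $j$ cells to the right of row $r'_j$. The augmented shape is still a Ferrers shape, but the special columns now have pairwise distinct lengths, so for each $i\le k$ it has a maximal rectangle whose intersection with $F$ is exactly $C_i$ (and likewise a maximal rectangle for each $R_j$). Since the dummy cells are zero and remain zero throughout, the longest chain in such a maximal rectangle equals the longest chain in $C_i$, and Theorem~\ref{thm_rubey} then controls all of $C_1,\dots,C_k,R_1,\dots,R_\ell$ simultaneously and verbatim; the dummy cells are deleted at the end. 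With that augmentation your argument goes through; without it, properties (2) and (3) are not established.
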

\begin{proof}
Let $\phi$ be a filling of~$F$. Refer to Figure~\ref{fig-steps} for the illustration of the steps 
of the construction. As the first step, we add new cells to $F$, filled with zeros, as follows: for 
every $i\le k$, we add $i$ new cells to the bottom of column $c'_i$, and for every $j\le \ell$, we 
add $j$ new cells to the right of row~$r'_j$. We refer to these newly added cells as \emph{dummy 
cells}. We let $F_1$ denote the Ferrers shape obtained from $F$ by the addition of the dummy cells, 
and $\phi_1$ the corresponding filling.

\begin{figure}
 \centerline{\includegraphics[scale=0.7]{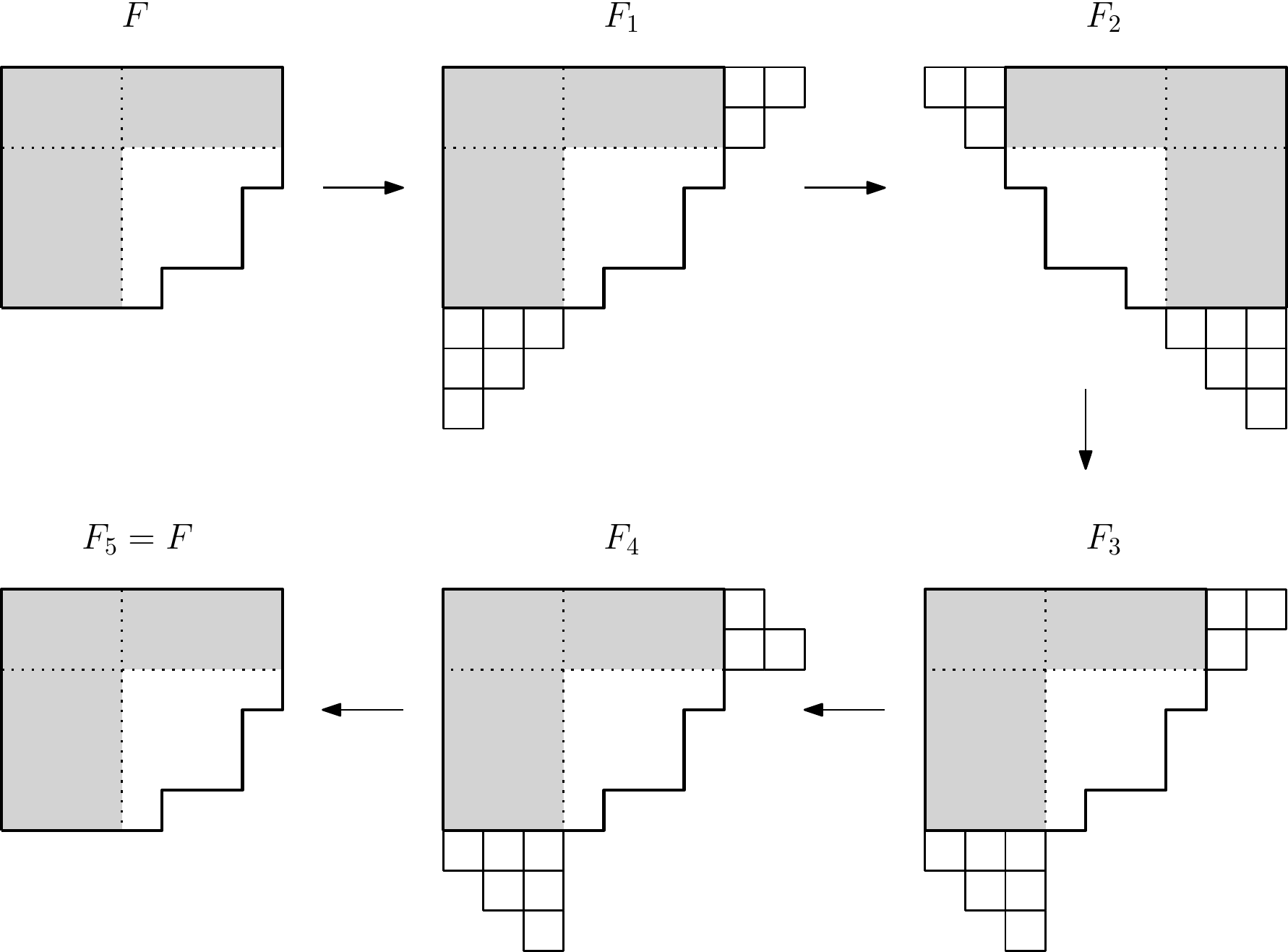}}
\caption{Construction of the bijection $\Gamma$ from Lemma~\ref{lem-ferrers}. The shaded areas 
correspond to the special rows and special columns.}\label{fig-steps}
\end{figure}

The dummy cells will remain filled with zeros throughout all the steps of the construction and will 
not contribute to any chain; their purpose is to ensure that for every $i\le k$ and $j\le\ell$, the 
shape $F_1$ has a maximal rectangle containing the cells of $C_i$ and no other cell of $F$, and 
similarly for~$R_j$. 

As the next step, we reverse the order of the columns of $F_1$ and $\phi_1$, while keeping the order 
of the rows preserved. Thus, we replace the filling $\phi_1$ of $F_1$ with its `mirror image' 
$\phi_2$ of the shape $F_2$, changing all SE-chains into NE-chains.

Let $\sigma$ be the permutation of the columns of $F_2$ which leaves the special columns of $F_2$ 
fixed, and places the non-special columns to the right of the special ones, in the same order as 
in~$F_1$. This permutation of columns yields a moon polyomino that only differs from $F_1$ by having 
the order of the special columns reversed. We call this polyomino~$F_3$. We now apply to $\phi_2$ the 
bijection from Theorem~\ref{thm_rubey}, with $\sigma$ as above, and let $\phi_3$ be the resulting 
filling of~$F_3$.

Next, we permute the rows of $F_3$, by reversing the order of the special rows. Let $F_4$ be the 
resulting moon polyomino. The corresponding filling $\phi_4$ is obtained by again invoking 
Theorem~\ref{thm_rubey}, or more precisely its symmetric version for row permutations.

Finally, we remove from $F_4$ and $\phi_4$ the dummy cells, to obtain a filling $\phi_5$ of the 
original Ferrers shape~$F$. We define $\Gamma$ by $\Gamma(\phi)=\phi_5$. It follows from 
Theorem~\ref{thm_rubey} that $\Gamma$ has all the properties stated in the lemma.
\end{proof}

\subsection{Proof of Theorem \ref{thm-sskew}}
We now have all the ingredients to prove the main result of this section.

\begin{proof}[Proof of Theorem \ref{thm-sskew}]
Clearly, it suffices to prove the theorem for connected skew shapes, since for a disconnected skew 
shape, we may treat each component separately. Let $S$ be a $\ds$-free connected skew shape. 
Fix its Ferrers decomposition 
\[
S = F_1 |^v G_1 |^h F_2 |^v \cdots |^h F_n |^v G_n.
\]
For two consecutive Ferrers shapes $F_i$ and $G_i$ from the decomposition, say that a row
is \emph{special} if $S$ contains a cell from both $F_i$ and $G_i$ in this row. In particular, $F_i$ 
and $G_i$ have the same number of special rows. Clearly, all the special rows have the same length 
in $F_i$ as well as in~$G_i$. Similarly, for consecutive Ferrers shapes $G_i$ and $F_{i+1}$, define 
a special column to be any column containing cells from both these shapes. 

Fix now a filling $\phi$ of $S$. We transform $\phi$ by applying the bijection $\Gamma$ separately 
to the restriction of $\phi$ in every Ferrers shape of the decomposition. More precisely, in 
$F_1,\dotsc,F_n$ we apply $\Gamma$ directly, while in $G_1,\dotsc,G_n$ we first rotate the filling 
by $180$ degrees, apply $\Gamma$, and then rotate back. Let $\phi'$ be the resulting filling of $S$, 
obtained by combining the transformed fillings of the individual Ferrers shapes in the decomposition.

We claim that $\phi'$ has all the properties stated in Theorem~\ref{thm-sskew}. Suppose first that 
$\phi$ contains $\delta_k$ for some $k$, and let us show that $\phi'$ contains~$\iota_k$. If the 
occurrence of $\delta_k$ in $\phi$ is confined to a single shape $F_i$ or $G_i$, then $\phi'$ 
contains $\iota_k$ in the same shape by the properties of~$\Gamma$: indeed, note that a filling 
of the shape $F_i$ (or $G_i$) contains $\delta_k$ if and only if its longest SE-chain has length at 
least $k$, and it contains $\iota_k$ if and only if its longest NE-chain has length at least~$k$.

Suppose now that $\phi$ has an occurrence of $\delta_k$ not confined to a single Ferrers shape of the 
decomposition. The occurrence must then be confined to the union of two consecutive shapes of the 
decomposition, since a pair of shapes further apart does not share any common row or column. 
Suppose, without loss of generality, that $\phi$ has an occurrence of $\delta_k$ inside $F_i\cup 
G_i$, with $k_1$ 1-cells from the occurrence being in $F_i$ and $k_2=k-k_1$ 1-cells from the 
occurrence being in~$G_i$. Moreover, let $s$ be the number of special rows in $F_i$ (and therefore 
also in $G_i$). There are then some values $s_1,s_2$ with $s_1+s_2=s$, such that $F_i$ contains a 
SE-chain of length $k_1$ in its topmost $s_1$ special rows while $G_i$ has a SE-chain of length 
$k_2$ in its $s_2$ bottommost special rows (which will become topmost once $G_i$ is rotated before 
the application of $\Gamma$).

Consequently, in $\phi'$, the shape $F_i$ has a NE-chain of length $k_1$ in its bottommost $s_1$ 
special rows, and $G_i$ has a NE-chain of length $k_2$ in its topmost $s_2$ special rows. The two 
chains together form a NE-chain of size $k$, i.e., an occurrence of $\iota_k$ in~$S$. The same 
reasoning shows that an occurrence of $\iota_k$ in $\phi'$ implies an occurrence of $\delta_k$ 
in~$\phi$.

Let us now consider the row sums of $\phi$ and $\phi'$. Let $S_i$ be the set of row indices of the 
special rows shared by $F_i$ and $G_i$. The sets $S_1,S_2,\dotsc,S_n$ are pairwise disjoint. If a 
row $r$ does not belong to any $S_i$, then it intersects only one shape from the decomposition and 
is not special. Such a row then has the same sum in $\phi$ as in $\phi'$, by the properties 
of~$\Gamma$. On the other hand, for the rows in a set $S_i$, the properties of $\Gamma$ ensure that 
the order of their row sums is reversed by the mapping $\phi\mapsto\phi'$, that is, the sum of the 
topmost row of $S_i$ in $\phi$ is the same as the sum of the bottommost row of $S_i$ in~$\phi'$, 
and so on. Thus, the row sums of $\phi'$ are obtained from the row sums of $\phi$ by a permutation 
that reverses the order of each $S_i$ and leaves the remaining elements fixed. An analogous property 
holds for column sums.
\end{proof}

\section{Binary fillings avoiding a chain of length 2}\label{sec-genskew}

In this section, we prove Theorem~\ref{thm-genskew}. Recall the skew shape $\ds$ and its filling 
$\fd$ from Figure~\ref{fig-D}.

Given a skew shape $S$ with a total of $N$ cells, we assign labels $c_1, c_2, \ldots, c_N$ to 
every cell starting from the lower left
corner, iterating over rows from the bottom to the top of $S$ and labeling cells in a row from left 
to right, as indicated in Figure~\ref{figure_cell_labels}.

We associate with every cell $c$ a three-part piecewise linear curve $l(c)$ consisting of the ray 
going from the top-right corner of $c$ to the left, the right border of $c$ and the ray going from 
the bottom-right corner of $c$ to the right. The curve $l(c_i)$ of the cell $c_i$ of a skew shape 
$S$ clearly divides $S$ into two parts, with cells $c_1,\dotsc,c_i$ below and $c_{i+1},\dotsc,c_N$ 
above the curve, as illustrated in Figure~\ref{figure_cell_labels}.

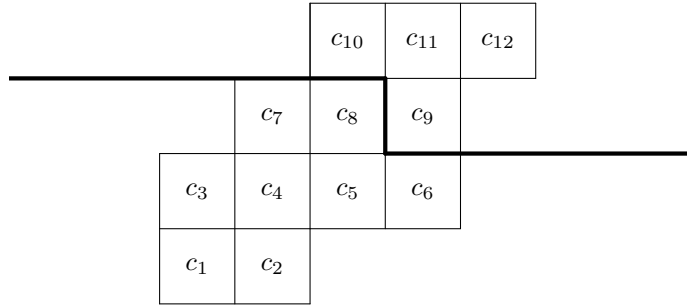
\begin{figure}[ht]
\centering
\begin{tikzpicture}[line cap=round,line join=round,>=triangle 45,x=1.0cm,y=1.0cm]
\clip(-6,-1.14) rectangle (3,3.15);
\draw (-4,-1)-- (-2,-1);
\draw (-2,-1)-- (-2,0);
\draw (-2,0)-- (0,0);
\draw (0,0)-- (0,2);
\draw (0,2)-- (1,2);
\draw (1,2)-- (1,3);
\draw (1,3)-- (-2,3);
\draw (-2,3)-- (-2,2);
\draw (-2,2)-- (-3,2);
\draw (-3,2)-- (-3,1);
\draw (-3,1)-- (-4,1);
\draw (-4,1)-- (-4,-1);
\draw (-2,3)-- (-2,0);
\draw (-1,3)-- (-1,0);
\draw (0,3)-- (0,2);
\draw (0,2)-- (-2,2);
\draw (0,1)-- (-3,1);
\draw (-2,0)-- (-4,0);
\draw (-3,1)-- (-3,-1);
\draw (-3.5, -0.5) node {$c_1$};
\draw (-2.5, -0.5) node {$c_2$};
\draw (-3.5, 0.5) node {$c_3$};
\draw (-2.5, 0.5) node {$c_4$};
\draw (-1.5, 0.5) node {$c_5$};
\draw (-0.5, 0.5) node {$c_6$};
\draw (-2.5, 1.5) node {$c_7$};
\draw (-1.5, 1.5) node {$c_8$};
\draw (-0.5, 1.5) node {$c_9$};
\draw (-1.5, 2.5) node {$c_{10}$};
\draw (-0.5, 2.5) node {$c_{11}$};
\draw (0.5, 2.5) node {$c_{12}$};
\draw [line width=1.6pt] (-1,2) -- (-10,2);
\draw [line width=1.6pt] (-1,1) -- (10,1);
\draw [line width=1.6pt] (-1,2) -- (-1,1);
\end{tikzpicture}
\caption{A skew shape with cell labels and the curve $l(c_8)$}
\label{figure_cell_labels}
\end{figure}

Let $\pi\in\{\delta_2,\iota_2,\fd\}$ be a filling. Let $S$ be a skew shape with cells numbered 
$c_1,\dotsc, c_N$ as above, and let $c_i$ be one of its cells. We say that an occurrence of $\pi$ in 
a filling $\psi$ of $S$ is \emph{$i$-low}, if the top-right cell of the occurrence is one of the 
cells $c_1,\dotsc,c_i$. Intuitively, this means that the entire occurrence is below the line 
$l(c_i)$. If an occurrence is not $i$-low, then it is $i$-high. Note that an occurrence may be 
$i$-high even when all its 1-cells are below~$l(c_i)$.

For a skew shape $S$ with cells numbered $c_1,\dotsc, c_N$ as above, let $\cG_i$ be the set of all 
the binary fillings of $S$ that avoid any $i$-high occurrence of $\delta_2$ as well as any $i$-low 
occurrence of $\iota_2$ or~$\fd$. See Figure~\ref{fig-forb} for an example.

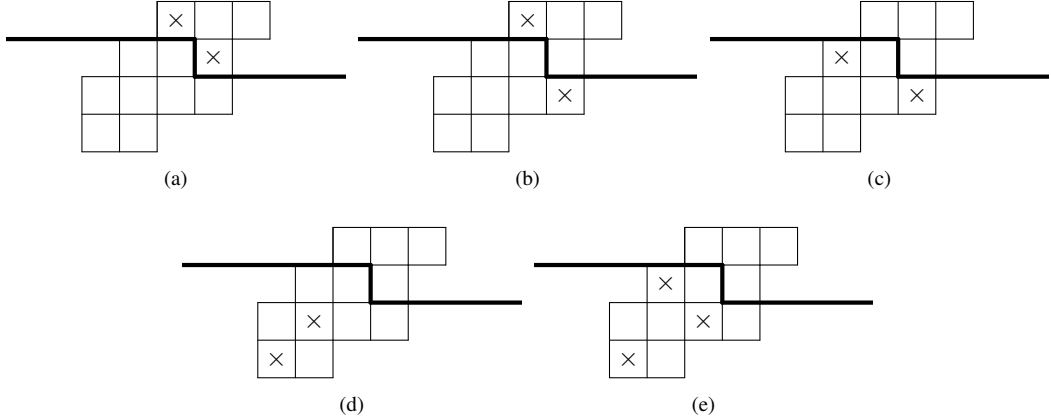
\begin{figure}[ht]
\centering
\subfloat[][] {
\begin{tikzpicture}[line cap=round,line join=round,>=triangle 45,x=0.5cm,y=0.5cm]
\clip(-6,-1.14) rectangle (3,3.15);
\draw (-4,-1)-- (-2,-1);
\draw (-2,-1)-- (-2,0);
\draw (-2,0)-- (0,0);
\draw (0,0)-- (0,2);
\draw (0,2)-- (1,2);
\draw (1,2)-- (1,3);
\draw (1,3)-- (-2,3);
\draw (-2,3)-- (-2,2);
\draw (-2,2)-- (-3,2);
\draw (-3,2)-- (-3,1);
\draw (-3,1)-- (-4,1);
\draw (-4,1)-- (-4,-1);
\draw (-2,3)-- (-2,0);
\draw (-1,3)-- (-1,0);
\draw (0,3)-- (0,2);
\draw (0,2)-- (-2,2);
\draw (0,1)-- (-3,1);
\draw (-2,0)-- (-4,0);
\draw (-3,1)-- (-3,-1);
\draw [line width=1.6pt] (-1,2) -- (-10,2);
\draw [line width=1.6pt] (-1,1) -- (10,1);
\draw [line width=1.6pt] (-1,2) -- (-1,1);
\draw (-1.5, 2.5) node {$\times$};
\draw (-0.5, 1.5) node {$\times$};
\end{tikzpicture}
}
\subfloat[][] {
\begin{tikzpicture}[line cap=round,line join=round,>=triangle 45,x=0.5cm,y=0.5cm]
\clip(-6,-1.14) rectangle (3,3.15);
\draw (-4,-1)-- (-2,-1);
\draw (-2,-1)-- (-2,0);
\draw (-2,0)-- (0,0);
\draw (0,0)-- (0,2);
\draw (0,2)-- (1,2);
\draw (1,2)-- (1,3);
\draw (1,3)-- (-2,3);
\draw (-2,3)-- (-2,2);
\draw (-2,2)-- (-3,2);
\draw (-3,2)-- (-3,1);
\draw (-3,1)-- (-4,1);
\draw (-4,1)-- (-4,-1);
\draw (-2,3)-- (-2,0);
\draw (-1,3)-- (-1,0);
\draw (0,3)-- (0,2);
\draw (0,2)-- (-2,2);
\draw (0,1)-- (-3,1);
\draw (-2,0)-- (-4,0);
\draw (-3,1)-- (-3,-1);
\draw [line width=1.6pt] (-1,2) -- (-10,2);
\draw [line width=1.6pt] (-1,1) -- (10,1);
\draw [line width=1.6pt] (-1,2) -- (-1,1);
\draw (-1.5, 2.5) node {$\times$};
\draw (-0.5, 0.5) node {$\times$};
\end{tikzpicture}
}
\subfloat[][] {
\begin{tikzpicture}[line cap=round,line join=round,>=triangle 45,x=0.5cm,y=0.5cm]
\clip(-6,-1.14) rectangle (3,3.15);
\draw (-4,-1)-- (-2,-1);
\draw (-2,-1)-- (-2,0);
\draw (-2,0)-- (0,0);
\draw (0,0)-- (0,2);
\draw (0,2)-- (1,2);
\draw (1,2)-- (1,3);
\draw (1,3)-- (-2,3);
\draw (-2,3)-- (-2,2);
\draw (-2,2)-- (-3,2);
\draw (-3,2)-- (-3,1);
\draw (-3,1)-- (-4,1);
\draw (-4,1)-- (-4,-1);
\draw (-2,3)-- (-2,0);
\draw (-1,3)-- (-1,0);
\draw (0,3)-- (0,2);
\draw (0,2)-- (-2,2);
\draw (0,1)-- (-3,1);
\draw (-2,0)-- (-4,0);
\draw (-3,1)-- (-3,-1);
\draw [line width=1.6pt] (-1,2) -- (-10,2);
\draw [line width=1.6pt] (-1,1) -- (10,1);
\draw [line width=1.6pt] (-1,2) -- (-1,1);
\draw (-2.5, 1.5) node {$\times$};
\draw (-0.5, 0.5) node {$\times$};
\end{tikzpicture}
}

\subfloat[][] {
\begin{tikzpicture}[line cap=round,line join=round,>=triangle 45,x=0.5cm,y=0.5cm]
\clip(-6,-1.14) rectangle (3,3.15);
\draw (-4,-1)-- (-2,-1);
\draw (-2,-1)-- (-2,0);
\draw (-2,0)-- (0,0);
\draw (0,0)-- (0,2);
\draw (0,2)-- (1,2);
\draw (1,2)-- (1,3);
\draw (1,3)-- (-2,3);
\draw (-2,3)-- (-2,2);
\draw (-2,2)-- (-3,2);
\draw (-3,2)-- (-3,1);
\draw (-3,1)-- (-4,1);
\draw (-4,1)-- (-4,-1);
\draw (-2,3)-- (-2,0);
\draw (-1,3)-- (-1,0);
\draw (0,3)-- (0,2);
\draw (0,2)-- (-2,2);
\draw (0,1)-- (-3,1);
\draw (-2,0)-- (-4,0);
\draw (-3,1)-- (-3,-1);
\draw [line width=1.6pt] (-1,2) -- (-10,2);
\draw [line width=1.6pt] (-1,1) -- (10,1);
\draw [line width=1.6pt] (-1,2) -- (-1,1);
\draw (-3.5, -0.5) node {$\times$};
\draw (-2.5, 0.5) node {$\times$};
\end{tikzpicture}
}
\subfloat[][] {
\begin{tikzpicture}[line cap=round,line join=round,>=triangle 45,x=0.5cm,y=0.5cm]
\clip(-6,-1.14) rectangle (3,3.15);
\draw (-4,-1)-- (-2,-1);
\draw (-2,-1)-- (-2,0);
\draw (-2,0)-- (0,0);
\draw (0,0)-- (0,2);
\draw (0,2)-- (1,2);
\draw (1,2)-- (1,3);
\draw (1,3)-- (-2,3);
\draw (-2,3)-- (-2,2);
\draw (-2,2)-- (-3,2);
\draw (-3,2)-- (-3,1);
\draw (-3,1)-- (-4,1);
\draw (-4,1)-- (-4,-1);
\draw (-2,3)-- (-2,0);
\draw (-1,3)-- (-1,0);
\draw (0,3)-- (0,2);
\draw (0,2)-- (-2,2);
\draw (0,1)-- (-3,1);
\draw (-2,0)-- (-4,0);
\draw (-3,1)-- (-3,-1);
\draw [line width=1.6pt] (-1,2) -- (-10,2);
\draw [line width=1.6pt] (-1,1) -- (10,1);
\draw [line width=1.6pt] (-1,2) -- (-1,1);
\draw (-3.5, -0.5) node {$\times$};
\draw (-2.5, 1.5) node {$\times$};
\draw (-1.5, 0.5) node {$\times$};
\end{tikzpicture}
}
\caption{Forbidden patterns of $\mathcal{G}_8$}\label{fig-forb}
\end{figure}

Note that $\cG_1$ is precisely the set of all $\delta_2$-avoiding binary fillings of $S$,
while $\cG_N$ is the set of $\{\iota_2,\fd\}$-avoiding binary fillings of $S$. Thus, 
Theorem~\ref{thm-genskew} will follow directly from the next lemma.

\begin{lemma}\label{lemma_gi}
Let $S$ be a skew shape with $N$ cells and let $1 \leq i < N$. Then there is a bijection between 
$\cG_i$ and~$\cG_{i+1}$. Moreover, this bijection preserves the number of 1-cells in each row.
\end{lemma}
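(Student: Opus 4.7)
The plan is to construct the bijection $\cgi \to \cgip$ by an explicit local modification centered at the cell $c = c_{i+1} = (C, R)$. As a first step, I would observe that $\cgi$ and $\cgip$ agree on every avoidance constraint except the one at $c$: $\cgi$ forbids $\delta_2$ with top-right $c$, whereas $\cgip$ forbids $\iota_2$ and $\fd$ with top-right $c$. By Observation~\ref{obs-skew}, any 1-cell $(C', R)$ strictly left of $c$ in row $R$ and any 1-cell $(C, R')$ strictly below $c$ in column $C$ automatically span a $2 \times 2$ rectangle lying in $S$, so $\phi$ contains $\delta_2$ with top-right $c$ iff both such 1-cells exist, and in particular any $\fd$ at $c$ implies $\delta_2$ at $c$. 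Hence $\phi \in \cgi \setminus \cgip$ iff $\phi \in \cgi$ and $\phi$ contains $\iota_2$ at $c$ (so $\phi(c) = 1$ and some 1-cell $(C^-, R^-)$ sits strictly SW of $c$ with $(C^-, R), (C, R^-) \in S$), while symmetrically $\phi \in \cgip \setminus \cgi$ iff $\phi \in \cgip$ and $\phi$ contains $\delta_2$ at $c$.

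I would then define the bijection to act as the identity on $\cgi \cap \cgip$ and, on $\phi \in \cgi \setminus \cgip$, to perform a local transformation involving a canonically chosen 1-cell $(C^-, R^-)$ strictly SW of $c$ and the ``antidiagonal'' cells $(C^-, R)$ and $(C, R^-)$. Morally, the transformation exchanges the diagonal pair of 1-cells $\{(C^-, R^-), (C, R)\}$ with the antidiagonal pair $\{(C^-, R), (C, R^-)\}$, thereby converting the $\iota_2$ at $c$ into a $\delta_2$ at $c$; since each of rows $R$ and $R^-$ trades exactly one $1$ for one $0$, the row sums are preserved. The inverse map on $\cgip \setminus \cgi$ is defined symmetrically, by picking a canonical pair of 1-cells realizing the $\delta_2$ at $c$ and undoing the swap.

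The principal obstacles are (a) ensuring that the canonical choice of $(C^-, R^-)$ yields a valid swap, i.e., that the antidiagonal cells $(C^-, R)$ and $(C, R^-)$ are indeed $0$ in $\phi$, and (b) verifying that the swap does not introduce any new forbidden pattern at a cell $c_k \neq c$. For (a) one exploits that $\phi$ avoids $\delta_2$ at $c$, which forces at least one of the row $R$ left of $c$ or the column $C$ below $c$ to be entirely zero in $\phi$, making one of the target cells automatically $0$; the other target cell is then forced to be $0$ by a suitably extremal choice of $(C^-, R^-)$ among the 1-cells of the SW region, using the $\cgi$-constraints at cells $c_k$ with $k \le i$. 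For (b) one argues that any new $\iota_2$, $\fd$, or $\delta_2$ occurrence in $\phi'$ would have to involve one of the modified cells, and then traces the hypothetical violation back to a pre-existing forbidden pattern in $\phi$ at some $c_{k'}$, contradicting $\phi \in \cgi$; here Observation~\ref{obs-skew} is applied repeatedly to convert pairs of 1-cells into full rectangles inside $S$ and hence into occurrences of the forbidden patterns in~$\phi$.

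The hardest step I anticipate is reconciling (a) and (b) coherently: the canonical choice of $(C^-, R^-)$ must simultaneously yield a row-sum-preserving swap and avoid creating forbidden patterns elsewhere. The pattern $\fd$ in $\cgip$ plays an essential role here, because its dented $3 \times 3$ shape precisely captures the 1-cell configurations near non-rectangular corners of $S$ that would otherwise obstruct the bijection; without forbidding $\fd$, the argument would only go through for transversal fillings, recovering Theorem~\ref{thm-bp}. Handling these $\fd$-related configurations by combining the geometry of skew shapes with the precise pattern structures is the technical heart of the proof.
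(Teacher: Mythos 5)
Your setup is sound and matches the paper's: reducing to the occurrences whose top-right cell is exactly $c_{i+1}$, observing that an $\fd$ at $c_{i+1}$ forces a $\delta_2$ at $c_{i+1}$, and acting as the identity on $\cgi\cap\cgip$ is precisely how the paper begins (its Claim~1 shows the two ``unaffected'' subsets coincide). The genuine gap is in the core of the map. The diagonal-for-antidiagonal swap of $\{(C^-,R^-),(C,R)\}$ with $\{(C^-,R),(C,R^-)\}$ is essentially the transversal-case argument, and it does not survive the passage to general binary fillings, for a concrete reason: let $A$ be the rectangle of cells lying both strictly left of and strictly below $c_{i+1}$ inside the maximal rectangle with top-right corner $c_{i+1}$. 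A filling in $\cgi\setminus\cgip$ may have 1-cells in several columns of $A$ at different heights (this is consistent with avoiding every $i$-low $\iota_2$, since the 1-cells of $A$ then form a decreasing staircase). Whatever single cell $(C^-,R^-)$ you choose, the new 1-cell $(C,R^-)$ placed in the column of $c_{i+1}$ forms an $(i+1)$-low $\iota_2$ with any surviving 1-cell of $A$ strictly below row $R^-$ and strictly left of column $C$, and such a cell exists whenever $A$ has two nonzero columns at distinct heights. No extremal choice of one cell repairs this; the obstruction is the entire content of $A$, not one entry. The swap also breaks the row-sum claim when the target cell $(C^-,R)$ is already a 1-cell, which can happen since $R$ need not be zero.

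There is a matching failure of surjectivity: $\cgip\setminus\cgi$ contains fillings with $c_{i+1}=1$ together with 1-cells in both the row-segment $R$ and the column-segment $C$ (then necessarily $A=0$), and fillings with $A\neq0$, several 1-cells in $R$, and $C\neq 0$; your swap never produces either configuration. The paper resolves both problems by splitting $\cgi\setminus\cgip$ and $\cgip\setminus\cgi$ into four corresponding sub-cases and, instead of moving a single 1-cell, \emph{cyclically shifting the contents of entire columns}: the filling of the rightmost nonzero column of $A$ is moved into $C$, each nonzero column of $A$ receives the content of the nonzero column to its left, the leftmost is emptied, and a single 1 is toggled in $R$ and at $c_{i+1}$. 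Because all these columns span the same set of rows, row sums are preserved, and because whole columns are transported the staircase structure of $A\cup C$ (no $\iota_2$ inside it) is maintained. That column-transport idea is the missing ingredient; without it, your argument proves only the sparse/transversal case already covered by Theorem~\ref{thm-bp}.
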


Most of the remainder of this section is devoted to the proof of Lemma~\ref{lemma_gi}. 
First, consider the case when the cells $c_i$ and $c_{i+1}$ are not in the same row, i.e., 
$c_{i+1}$ 
is the first cell of a new row and $c_i$ is the last cell of the previous row. In this case we will 
show that the sets $\cG_i$ and $\cG_{i+1}$ are identical. 

To see this, note that since the three relevant patterns $\delta_2$, $\iota_2$ and $\fd$ all have 
two cells in the topmost row, they can never have an occurrence whose top-right cell is mapped to 
$c_{i+1}$. Thus, in a filling $\psi$ of $S$, an occurrence of any of these patterns is $i$-low
if and only if it is $(i+1)$-low, and hence $\cG_i=\cG_{i+1}$.

\begin{figure}[ht]
\centering
\includegraphics[width=0.5\textwidth]{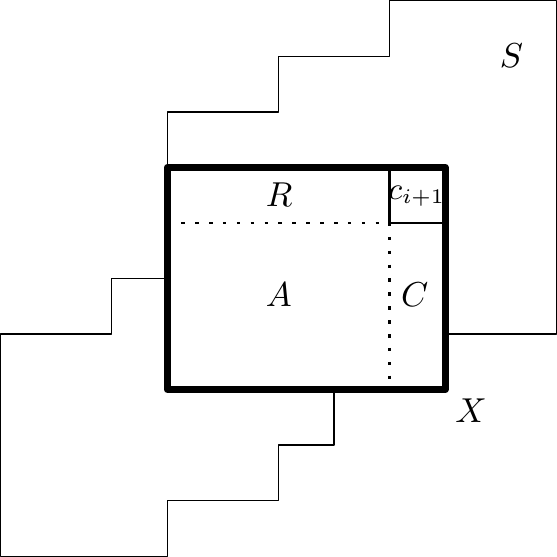}
\caption{The maximal rectangle $X$ with $c_{i+1}$ in the top-right corner}
\label{figure_max_rec}
\end{figure}

Suppose now that $c_i$ and $c_{i+1}$ are adjacent cells in a single row.  Let $R$ consist of all the 
cells in the same row as $c_{i+1}$ and 
strictly to the left of it, let $C$ consist of all the cells in the column of $c_{i+1}$ and 
strictly below it, and let $A$ be the rectangle consisting of all cells that are in a column 
intersecting $R$ and a row intersecting~$C$; see Figure~\ref{figure_max_rec}. Note that $A$, $R$, 
$C$ and $c_{i+1}$ form a rectangle $X$ which is maximal among all the rectangles contained in $S$ 
with $c_{i+1}$ as top-right corner. Therefore, in any occurrence of $\iota_2$ with the upper 1-cell 
in $c_{i+1}$, the lower 1-cell is inside the rectangle~$A$.

Let us make several observations about the structure of the fillings in $\cG_i$ and $\cG_{i+1}$ 
which will be useful later in the proof.

\begin{observation}\label{obs-rc}
In a filling $\psi\in\cG_i$, at most one of $R$ and $C$ can be nonzero, otherwise $\psi$ would 
contain an $i$-high copy of $\delta_2$.
\end{observation}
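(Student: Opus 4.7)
The plan is to argue by contradiction. I would assume that a filling $\psi\in\cG_i$ has both $R\neq 0$ and $C\neq 0$, pick any 1-cell $r\in R$ and any 1-cell $c\in C$, and exhibit an $i$-high occurrence of $\delta_2$ supported on $r$ and $c$, with $c_{i+1}$ playing the role of the top-right corner. Once such an occurrence is produced, it is forbidden in $\cG_i$ by definition, yielding the required contradiction.

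The easy part is checking the geometric layout. By definition of $R$, the cell $r$ shares a row with $c_{i+1}$ and lies strictly to its left; by definition of $C$, the cell $c$ shares a column with $c_{i+1}$ and lies strictly below it. Hence $r$ is strictly above and strictly to the left of $c$, and the three cells $r$, $c_{i+1}$ and $c$ occupy, respectively, the top-left, top-right and bottom-right positions of a would-be $\delta_2$-occurrence.

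The one step that is not immediate — and the one I would flag as the main obstacle — is to verify that the fourth corner of this $2\times 2$ pattern, at the intersection of the column of $r$ with the row of $c$, actually belongs to $S$; otherwise the $\delta_2$ subshape would fail to occur as a subshape at all. Here I would invoke Observation~\ref{obs-skew}: since $r$ and $c$ lie in the skew shape $S$ at two opposite corners of their bounding rectangle, every cell of that rectangle must lie in $S$, including the missing bottom-left one. The resulting occurrence of $\delta_2$ has top-right cell $c_{i+1}$, which by construction is not among $c_1,\ldots,c_i$, so the occurrence is $i$-high, contradicting $\psi\in\cG_i$ and completing the argument.
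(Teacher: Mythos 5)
Your proof is correct and fills in exactly the argument the paper leaves implicit: the paper states this as an observation with the justification folded into its statement, and the one genuinely non-trivial point --- that the bottom-left corner of the bounding rectangle of $r$ and $c$ lies in $S$, via Observation~\ref{obs-skew} applied to the top-left/bottom-right pair --- is precisely the step you identify and handle. The conclusion that the occurrence is $i$-high because its top-right cell is $c_{i+1}$ is also exactly right.
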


\begin{observation}\label{obs-ac}
In a filling $\rho\in\cG_{i+1}$, at most one of $c_{i+1}$ and $A$ can be nonzero, otherwise $\rho$ 
would contain an $(i+1)$-low copy of~$\iota_2$.
\end{observation}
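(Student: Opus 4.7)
The plan is to derive a direct contradiction from the assumption that both $c_{i+1}$ and $A$ contain a 1-cell of some $\rho\in\cgip$. Specifically, if $\rho(c_{i+1})=1$ and there exists a cell $a\in A$ with $\rho(a)=1$, I will exhibit $\{a,c_{i+1}\}$ as an $(i+1)$-low occurrence of $\iota_2$, which is forbidden by the definition of $\cgip$.

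The geometric core of the argument comes from the way $A$ and $c_{i+1}$ sit inside the maximal rectangle $X$. By construction $A$ occupies all cells whose column meets $R$ (strictly left of $c_{i+1}$) and whose row meets $C$ (strictly below $c_{i+1}$). Hence any $a\in A$ is strictly to the left of and strictly below $c_{i+1}$, so the pair $(a,c_{i+1})$ forms a NE-chain of length 2. To confirm that this is genuinely an occurrence of $\iota_2$ in the sense of the paper (which demands that the subshape spanned by the chosen rows and columns is a $2\times 2$ square), I note that the $2\times 2$ bounding box of $a$ and $c_{i+1}$ is entirely contained in the rectangle $X\subseteq S$, so all four spanning cells belong to $S$.

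Finally I need to verify the occurrence is $(i+1)$-low. Its top-right cell is precisely $c_{i+1}$, which trivially lies in $\{c_1,\dots,c_{i+1}\}$, so the occurrence is $(i+1)$-low by definition. This contradicts $\rho\in\cgip$, so at most one of $c_{i+1}$ and $A$ can be nonzero in $\rho$. No step here is substantive enough to be a genuine obstacle: the statement is essentially a direct unpacking of the definitions of $X$, of an occurrence of $\iota_2$, and of $(i+1)$-lowness, and the only thing worth being careful about is the requirement that the spanning $2\times 2$ subshape of the chain lie inside $S$, which is immediate from $X\subseteq S$.
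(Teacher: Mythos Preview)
Your argument is correct and matches the paper's own reasoning: the paper states this as an observation with the justification already embedded in the statement (namely that a 1-cell in $A$ together with $c_{i+1}$ would form an $(i+1)$-low copy of $\iota_2$), and you have simply spelled out the details carefully, including the check that the spanning $2\times 2$ block lies inside $X\subseteq S$.
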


\begin{observation}\label{obs-acra} In a filling $\psi\in\cG_i\cup\cG_{i+1}$, there is no copy of 
$\iota_2$ inside either of the two rectangles $A\cup C$ and $A\cup R$, since such a copy would 
necessarily be $i$-low (and hence also $(i+1)$-low). In particular, if $C_a$ and $C_b$ are two 
nonzero columns of $A\cup C$ (or $A\cup R$) with $C_a$ to the left of $C_b$, then all the 1-cells 
of $c_a$ are weakly above any 1-cell of~$C_b$.
\end{observation}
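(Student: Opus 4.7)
The plan is to verify both parts of the observation by carefully unpacking the definition of an $i$-low occurrence and the geometry of $A\cup C$ and $A\cup R$. We are in the case where $c_i$ and $c_{i+1}$ are adjacent cells of a single row, so by the labelling convention $\{c_1,\dotsc,c_i\}$ consists of all cells lying strictly below the row of $c_{i+1}$ together with those cells in the row of $c_{i+1}$ that are strictly to the left of $c_{i+1}$. An occurrence of $\iota_2$ is $i$-low exactly when its top-right $1$-cell belongs to this set, and any $i$-low occurrence is automatically $(i+1)$-low, so it is forbidden in every filling of $\cG_i\cup\cG_{i+1}$.

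For the first assertion, I would take any putative $\iota_2$ occurrence inside $A\cup C$ or $A\cup R$ and let $(c,r)$ be its top-right $1$-cell. Every cell of $A\cup C$ lies strictly below the row of $c_{i+1}$ (both $A$ and $C$ do, by definition), so $(c,r)$ carries a label at most $i$. For $A\cup R$ the cell $(c,r)$ is either in $A$ (strictly below the row of $c_{i+1}$) or in $R$ (in the row of $c_{i+1}$ but strictly to the left of it), and again $(c,r)$ has label at most $i$. In either case the occurrence is $i$-low, hence $(i+1)$-low as well, which contradicts the assumption $\psi\in\cG_i\cup\cG_{i+1}$.

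For the ``in particular'' statement I would argue by contradiction: assume $C_a$ is a column of $A\cup C$ strictly to the left of $C_b$ and that there exist $1$-cells $(C_a,r_a)$ and $(C_b,r_b)$ with $r_a<r_b$. The paragraph preceding the observation identifies $X=A\cup R\cup C\cup\{c_{i+1}\}$ as a rectangle contained in $S$, namely the maximal rectangle with $c_{i+1}$ in its top-right corner; hence $A\cup C$ is the sub-rectangle of $X$ obtained by deleting the top row of $X$, and $A\cup R$ is the sub-rectangle obtained by deleting its rightmost column. Consequently all four corners of the $2\times 2$ square spanned by $(C_a,r_a)$ and $(C_b,r_b)$ lie in $S$, producing a genuine $\iota_2$ occurrence inside $A\cup C$ (and symmetrically for $A\cup R$), which contradicts the first assertion just proved. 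The whole argument is pure bookkeeping; the only place requiring any care is the verification that $A\cup C$ and $A\cup R$ are honest rectangles in $S$, and this is immediate from the explicit description of the maximal rectangle $X$ given just before the observation.
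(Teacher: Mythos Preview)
Your proposal is correct and matches the paper's reasoning, which in fact is embedded directly in the statement of the observation rather than given as a separate proof. You have simply spelled out in detail what the paper compresses into the parenthetical clause ``since such a copy would necessarily be $i$-low (and hence also $(i+1)$-low)'', together with the geometric fact that $A\cup C$ and $A\cup R$ are genuine sub-rectangles of $X\subseteq S$.
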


We say that in a filling $\psi\in\cG_i\cup\cG_{i+1}$, the row $R$ \emph{overlaps} the rectangle $A$ 
in column $C'$, if the column $C'$ contains a 1-cell in row $R$ as well as a 1-cell inside~$A$. Note 
that if this happens, then row $R$ contains no 1-cell to the right of column $C'$, and $A$ contains 
no 1-cell to the left of column $C'$, otherwise we get a contradiction with 
Observation~\ref{obs-acra}.

We will partition each of the sets $\mathcal{G}_i$ and $\mathcal{G}_{i+1}$ into five pairwise
disjoint subsets and construct bijections between corresponding subsets. The set $\cG_i$
is partitioned as follows:
\begin{itemize}
\item $\mathcal{G}_i^1$ contains the fillings from $\mathcal{G}_i$ in which $c_{i+1}=0$ or~$A=0$.
\item $\mathcal{G}_i^2$ contains the fillings from $\mathcal{G}_i$ with $c_{i+1}=1$, $A\neq 0$ and 
 $C\neq0$. Note that this implies $R=0$, by Observation~\ref{obs-rc}.
\item $\cG_i^3$ contains the fillings from $\mathcal{G}_i$ with $c_{i+1}=1$, $R$ overlapping with 
$A$, and $A$ having only one nonzero column. Note that this implies $C=0$, by 
Observation~\ref{obs-rc}.
\item $\cG_i^4$ contains the fillings from $\mathcal{G}_i$ with $c_{i+1}=1$, $R$ overlapping with 
$A$, and $A$ having at least two nonzero columns. Note that this again implies $C=0$, by 
Observation~\ref{obs-rc}.
\item $\cG_i^5$ contains the fillings from $\mathcal{G}_i$ with $c_{i+1}=1$, $A\neq 0$,  
$R$ not overlapping $A$ (possibly $R=0$), and $C=0$.
\end{itemize}

Note that each filling from $\cG_i$ belongs to exactly one of the five subsets defined above. We now 
define an analogous partition of $\cG_{i+1}$.

\begin{itemize}
\item $\mathcal{G}_{i+1}^1$ contains the fillings from $\mathcal{G}_{i+1}$ in which $C=0$ or $R=0$.
\item $\mathcal{G}_{i+1}^2$ contains the fillings from $\mathcal{G}_{i+1}$ in which $C\neq 0$, 
$R$ contains exactly one 1-cell, and $R$ overlaps with $A$. Note that this implies $A\neq 0$ and 
hence $c_{i+1}=0$ by Observation~\ref{obs-ac}.
\item $\cG_{i+1}^3$ contains the fillings from $\mathcal{G}_{i+1}$ in which $c_{i+1}=1$, and both $R$ 
and $C$ are nonzero. Note that this implies $A=0$, by Observation~\ref{obs-ac}.
\item $\cG_{i+1}^4$ contains the fillings from $\mathcal{G}_{i+1}$ in which $C\neq0$, 
$R$ contains more than one 1-cell, and $R$ overlaps with~$A$. This again implies that $A\neq 0$ 
and hence $c_{i+1}=0$ by Observation~\ref{obs-ac}.
\item $\cG_{i+1}^5$ contains the fillings from $\mathcal{G}_{i+1}$ in which $R$ and $C$ are both 
nonzero, $c_{i+1}=0$, and $R$ does not overlap~$A$ (possibly with $A=0$).
\end{itemize}

Again, it follows from the definitions that each filling from $\cG_{i+1}$ belongs to exactly one 
of the five sets above. Our plan is to show that for every $j\in\{1,\dotsc,5\}$, the two sets 
$\cG_i^j$ and $\cG_{i+1}^j$ have the same size. We first deal with the sets $\cG_i^1$ and 
$\cG_{i+1}^1$, which turn out to be identical.

\begin{claim}\label{cla-1}
The set $\cG_i^1$ is equal to the set $\cG_{i+1}^1$.
\end{claim}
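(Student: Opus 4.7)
The plan is to prove $\cG_i^1 = \cG_{i+1}^1$ by showing the two inclusions separately, bridging the apparently different defining conditions via the two structural Observations~\ref{obs-rc} and~\ref{obs-ac}. The guiding remark is that $\cG_i$ and $\cG_{i+1}$ differ only with respect to forbidden occurrences whose top-right cell is $c_{i+1}$: moving from $\cG_i$ to $\cG_{i+1}$ drops the prohibition of $\delta_2$ at $c_{i+1}$ (which is $i$-high but not $(i{+}1)$-high) and adds the prohibitions of $\iota_2$ and $\fd$ at $c_{i+1}$ (which are $(i{+}1)$-low but not $i$-low). Everything else is common to both sets.

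The first real step is to tabulate, for each of the three patterns, when an occurrence with top-right $c_{i+1}$ exists, phrased in terms of $R$, $C$, $A$, and $c_{i+1}$. A $\delta_2$-occurrence at $c_{i+1}$ exists if and only if $R$ and $C$ both contain a 1-cell (the remaining corner of the $2{\times}2$ hull falls into $A\subseteq S$, so the hull is a valid subshape). An $\iota_2$-occurrence at $c_{i+1}$ exists if and only if $c_{i+1}=1$ and $A$ contains a 1-cell. Reading off the filling $\fd$ from Figure~\ref{fig-D}, two of its three 1-cells sit in the top row and the right column of $\ds$ (flanking the zero-valued top-right corner), so every $\fd$-occurrence with top-right $c_{i+1}$ forces a 1-cell in $R$ and a 1-cell in $C$.

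With this dictionary, the inclusion $\cG_i^1\subseteq\cG_{i+1}^1$ follows immediately: given $\psi\in\cG_i^1$, Observation~\ref{obs-rc} delivers $R=0$ or $C=0$ (the defining condition of $\cG_{i+1}^1$), while the given condition $c_{i+1}=0$ or $A=0$ blocks the new $\iota_2$ at $c_{i+1}$ and the new $\fd$ at $c_{i+1}$; the remaining forbidden patterns of $\cG_{i+1}$ are subsets of patterns already avoided by $\psi$. For the reverse inclusion, given $\rho\in\cG_{i+1}^1$, Observation~\ref{obs-ac} supplies $c_{i+1}=0$ or $A=0$ (the defining condition of $\cG_i^1$), and the only $i$-high $\delta_2$ not already $(i{+}1)$-high lives at $c_{i+1}$ and is blocked by the given $C=0$ or $R=0$; the $i$-low $\iota_2$ and $\fd$ occurrences are subsets of their $(i{+}1)$-low counterparts and so are again avoided.

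The only step that is not purely definitional is the structural description of $\fd$-occurrences with top-right $c_{i+1}$, which I expect to be the main obstacle: one has to inspect the figure of $\fd$ and observe that two of its three 1-cells must map into $R$ and $C$ under any such occurrence, regardless of where the third 1-cell (which sits outside $A\cup R\cup C\cup\{c_{i+1}\}$) lands. Once this observation is recorded, the claim becomes routine bookkeeping between the two observations and the definitions.
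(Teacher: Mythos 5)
Your proof is correct and follows essentially the same route as the paper's: both directions hinge on the fact that the only occurrences distinguishing $\cG_i$ from $\cG_{i+1}$ are those with top-right cell $c_{i+1}$, with Observation~\ref{obs-rc} supplying the condition $R=0$ or $C=0$ (which simultaneously kills the $\fd$ at $c_{i+1}$) and Observation~\ref{obs-ac} supplying $c_{i+1}=0$ or $A=0$. Your explicit dictionary of which pattern occurrences at $c_{i+1}$ correspond to which of $R$, $C$, $A$, $c_{i+1}$ being nonzero is exactly the bookkeeping the paper performs, just stated up front.
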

\begin{proof}
To show that $\cG_i^1\subseteq\cG_{i+1}^1$, choose a filling $\psi\in\cG_i^1$. Clearly, since $\psi$ 
has no $i$-high occurrence of $\delta_2$, it has no $(i+1)$-high occurrence either. Also, $C$ or $R$ 
must be zero in $\psi$, otherwise $\psi$ would contain an $i$-high occurrence of $\delta_2$.
The only possibility for $\psi$ to have an $(i+1)$-low occurrence of $\iota_2$ or $\fd$
is that $c_{i+1}$ is actually the top-right corner of the occurrence. Since $c_{i+1}$ or $A$ is 
zero by the definition of $\cG_i^1$, there is no such occurrence of~$\iota_2$. Since $C$ or $R$ is 
zero as we saw above, there is no such occurrence of $\fd$ either. This shows that $\psi$ 
belongs to~$\cG_{i+1}^1$. 

Conversely, to show that $\cG_{i+1}^1\subseteq\cG_i^1$, choose $\rho\in \cG_{i+1}^1$. We see 
that $\rho$ has no $i$-low occurrence of $\iota_2$ or $\fd$, since it has no such $(i+1)$-low 
occurrence. It also has no $i$-high occurrence of $\delta_2$, because the top-right cell of such an 
occurrence would have to coincide with $c_{i+1}$, which is excluded by the definition of 
$\cG_{i+1}^1$. Hence $\rho$ is in $\cG_i$. And since $\rho$ has no $(i+1)$-low occurrence of 
$\iota_2$, we may conclude that $\rho$ is in~$\cG_i^1$. This proves the claim.
\end{proof}

Let us define $\cgi^+=\cgi\setminus\cgi^1$ and $\cgip^+=\cgip\setminus\cgip^1$. 
To show that $|\cG_i^j|=|\cG_{i+1}^j|$ for $j>1$, we will construct a bijection $f\colon 
\cgi^+\to\cgip^+$ and its inverse $g\colon\cgip^+\to\cgi$, such that for every $j\in\{2,3,4,5\}$ and 
every $\psi\in\cgi^j$ we have $f(\psi)\in \cgip^j$, and similarly, for any $\rho\in\cgip^j$, we 
have $g(\rho)\in\cgi^j$.

We first describe the two mappings $f$ and $g$, and then show that they have the required 
properties. In all the cases, the mappings $f$ and $g$ will only modify the values of the cells 
inside the rectangle $X$, while the rest of the filling remains unchanged.

For a filling $\psi\in\cgi^2$, let $C'\subseteq A$ be the leftmost nonzero column of~$A$. The 
filling $f(\psi)$ is then obtained from $\psi$ by setting the cell of $R$ directly above $C'$ to 1, 
and the cell $c_{i+1}$ to 0, keeping the remaining cells of $\psi$ unchanged. 

Consider now a filling $\rho\in\cgip^2$. Recall from the definition of $\cgip^2$ and from 
Observation~\ref{obs-acra} that $R$ contains a single 1-cell of $\rho$, and this 1-cell is directly 
above the leftmost nonzero column of~$A$. We define $g(\rho)$ to be the filling obtained from 
$\rho$ by setting the single 1-cell of $R$ to 0 and the cell $c_{i+1}$ to~1.

For a filling $\psi\in\cgi^3$, let $C'\subseteq A$ be the unique nonzero column of~$A$. Then 
$f(\psi)$ is obtained from $\psi$ by replacing the filling of~$C$ with the filling of~$C'$, and then 
filling $C'$ with zeros. 

For a filling $\rho\in\cgip^3$, we let $C'\subseteq A$ be the column of $A$ directly below the 
rightmost 1-cell of $R$, and we define $g(\rho)$ as the filling obtained from $\rho$ by replacing the 
filling of $C'$ with the values from~$C$, and filling $C$ with zeros.

Consider now $\psi\in\cgi^4$. Let $C_1, C_2,\dotsc,C_k\subseteq A$ be the sequence of all the 
nonzero columns of $A$, in left-to-right order. Recall that by the definition of $\cgi^4$ and by 
Observation~\ref{obs-acra}, the rightmost 1-cell of $R$ is directly above the column~$C_1$. We 
create the filling $f(\psi)$ from $\psi$ as follows: we replace the filling of $C$ by the filling of 
$C_k$, then we replace, for every $j=1,\dotsc,k-1$, the filling of $C_{j+1}$ by the filling of 
$C_j$, and we fill $C_1$ with zeros. Finally, we set cell of $R$ directly above $C_2$ to 1, and the 
cell $c_{i+1}$ to~0.

Now consider a filling $\rho\in\cgip^4$. We let $C_2, C_3,\dotsc,C_k$  be the nonzero columns of $A$ 
ordered left to right. Note that by the definition of $\cgip^4$ and by Observation~\ref{obs-acra}, 
the rightmost 1-cell in $R$ is directly above~$C_2$, and $R$ has at least one other 1-cell to the 
left of it. Let $C_1$ be the rightmost column of $A$ to the left of $C_2$ that has a 1-cell of 
$R$ directly above it. We create $g(\rho)$ from $\rho$ as follows: for $j=1,\dotsc,k-1$ we 
fill $C_j$ with the values in $C_{j+1}$, then fill $C_k$ with the values in $C$, fill $C$ with 
zeros, and finally set the cell of $R$ directly above $C_2$ to 0 and the cell $c_{i+1}$ to~1.

For a filling $\psi\in\cgi^5$, let $C_1, C_2,\dotsc,C_k\subseteq A$ be again the sequence of all the 
nonzero columns of $A$ in left-to-right order. If $R$ has any 
1-cells, they must all be strictly to the left of the cell above~$C_1$. We create $f(\psi)$ from 
$\psi$ by replacing $C$ with $C_k$, then replacing $C_{j+1}$ with $C_j$ for every $j=1,\dots,k-1$, 
and filling $C_1$ with zeros. Finally, we set the cell of $R$ directly above $C_1$ to 1 and 
$c_{i+1}$ to~0. 

For a filling $\rho\in\cgip^5$, let $C_2,\dotsc,C_k$ be the nonzero columns of $A$ 
ordered left to right, and let $C_1$ be the column of $A$ directly below the rightmost 1-cell of $R$. 
Note that by the definition of $\cgip^5$, $C_1$ is strictly to the left of~$C_2$. We create $g(\rho)$ 
by replacing, for $j=1,\dotsc,k-1$, $C_j$ with $C_{j+1}$, replacing $C_k$ with $C$, filling $C$ with
zeros, setting the cell of $R$ above $C_1$ to 0 and $c_{i+1}$ to~1.

Our next aim is to show that the two mappings $f$ and $g$ defined above cannot create any of the 
forbidden patterns. We first collect several useful observations about the properties of $f$ 
and~$g$, which can be verified by inspecting the definition of the two mappings.

\begin{observation}\label{obs-fg} 
Let $\psi$ be a filling from $\cgi^+$, and let $\rho$ be a filling from $\cgip^+$. Recall that 
$X$ is the rectangle formed by the union of $C$, $R$, $A$ and $c_{i+1}$.
\begin{enumerate}
\item \label{p-row} Each row of $X$ has the same number of 1-cells in $\psi$ as in $f(\psi)$, and 
the same number 
of 1-cells in $\rho$ as in $g(\rho)$.
\item \label{p-col} A column of $X$ is nonzero in $\psi$ if and only if it is nonzero in $f(\psi)$. 
A column of $X$ is nonzero in $\rho$ if and only if it is nonzero in~$g(\rho)$.
\item \label{p-r} Suppose that $f(\psi)$ has a 1-cell $c'$ inside $R$. Then all the 
1-cells of $f(\psi)$ inside $A$ are weakly to the right of~$c'$. The same property holds 
for~$g(\rho)$ as well.
\item \label{p-frow} Suppose that $(x,y)$ is a 1-cell of $f(\psi)$ inside $A\cup C$. Then $\psi$ 
contains a 1-cell $(x',y)\in A\cup C$ with $x'\le x$, i.e., a 1-cell in $A\cup C$ in the same row 
and weakly to the left of~$(x,y)$.
\item \label{p-fcol} Suppose that $(x,y)$ is a 1-cell of $f(\psi)$ inside $A\cup R$. Then $\psi$ has 
a 1-cell $(x,y')\in A\cup R$ with $y'\le y$, i.e., a 1-cell in $A\cup R$ in the same column and 
weakly below~$(x,y)$.  (Here we use Observation~\ref{obs-acra}.)
\item \label{p-grc} Suppose that $(x,y)$ is a 1-cell of $g(\rho)$ inside $A$. Then $\rho$ contains a 
1-cell $(x',y)$ in $A\cup C$ for some $x'\ge x$, as well as a 1-cell $(x,y')$ in $A\cup R$ for some 
$y'\ge y$. (This again follows from Observation~\ref{obs-acra}.)
\item \label{p-ac} The subfilling of $\psi$ induced by its nonempty columns of $A\cup C$  is the 
same as the subfilling of $f(\psi)$ induced by its nonempty columns of $A\cup C$. The same is true 
for the two fillings $\rho$ and $g(\rho)$.
\item \label{p-grr} The mapping $g$ does not change any 0-cell in $R\cup C$ into a 1-cell. In 
other words, if a cell $c'\in R\cup C$ is a 1-cell in $g(\rho)$ then it is also a 1-cell in~$\rho$.
\end{enumerate}
\end{observation}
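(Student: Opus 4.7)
The proof is a direct case-by-case verification: each of the eight properties concerns all of $\cgi^+$ and $\cgip^+$, so each must be checked against the four sub-definitions of $f$ (on $\cgi^j$ for $j=2,3,4,5$) and the four sub-definitions of $g$. The unifying observation that organises the bookkeeping is that in every case, $f$ acts by a cyclic shift of the nonzero columns of $A\cup C$, namely $C_1 \mapsto C_2 \mapsto \dotsb \mapsto C_k \mapsto C$ (leaving the position of $C_1$ zero), combined with a local toggle between $c_{i+1}$ and at most one cell of $R$; the action of $g$ is the inverse cyclic shift plus a similar toggle. In cases $\cG^3$, $\cG^4$, $\cG^5$ the hypothesis $C=0$ (and symmetrically $c_{i+1}=0$ for the $g$-cases) ensures that the shift really is a permutation of the nonzero columns of $A\cup C$.

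Properties (p-row), (p-col), and (p-ac) are then essentially immediate: a cyclic permutation of columns preserves row sums and preserves the set of nonzero columns of $A\cup C$, while the local swap between $c_{i+1}$ and a cell of $R$ preserves the sum of their shared row. Property (p-grr) follows by inspecting the four rules defining $g$: the only modifications to $R\cup C$ are zeroings (a $1$-cell of $R$ set to $0$, or the entire column $C$ cleared), so no $0$-cell of $R\cup C$ is ever converted into a $1$-cell.

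Properties (p-r), (p-frow), (p-fcol), and (p-grc) all follow by tracking individual $1$-cells through the cyclic shift. A $1$-cell of $f(\psi)$ at $(x,y)\in A\cup C$ arises from a $1$-cell of $\psi$ located one column to the left (or in the same column, when that column is not shifted), which immediately yields (p-frow). For (p-fcol), the shift moves a $1$-cell of $\psi$ from column $C_j$ to column $C_{j+1}$, and Observation~\ref{obs-acra} applied inside $A\cup R$ provides a $1$-cell of $\psi$ in the target column weakly below the image. Property (p-grc) is the mirror statement for $g$ and follows by the same argument with the shift reversed, invoking Observation~\ref{obs-acra} inside both $A\cup C$ and $A\cup R$. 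Finally, (p-r) is built directly into the construction: in every subcase the cell of $R$ that $f$ (or $g$) sets to $1$ is placed above the leftmost (or, in case $\cG^4$, the second-leftmost) nonzero column of $A$ in the output filling, so every $1$-cell of $A$ in the output lies weakly to the right of it.

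The main obstacle here is not conceptual but combinatorial bookkeeping: eight properties against four cases for $f$ and four for $g$ yield on the order of sixty small checks. None of the individual checks is hard, but care is required to keep straight which hypothesis ($C=0$ versus $A=0$ versus which of $c_{i+1}$, $R$ is being toggled) applies in each subcase, and to identify precisely where Observation~\ref{obs-acra} and the overlap constraints stated just before Claim~\ref{cla-1} are being invoked.
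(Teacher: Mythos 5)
Your proposal is correct and takes essentially the same route as the paper, which gives no explicit proof of this observation beyond the remark that the properties ``can be verified by inspecting the definition of the two mappings''; your organisation of that inspection around the rightward column shift of $f$ (and its leftward inverse for $g$) together with the $c_{i+1}$/$R$ toggle is exactly the intended bookkeeping, with Observation~\ref{obs-acra} invoked in the same places. One small slip worth fixing: in your justification of part~(3) you refer to ``the cell of $R$ that $g$ sets to $1$,'' but $g$ only ever zeroes cells of $R$ (that is part~(8)), so for $g$ part~(3) instead follows by checking that after the deletion the rightmost surviving $1$-cell of $R$ lies weakly to the left of every nonzero column of $A$ in $g(\rho)$.
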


\begin{claim}\label{cla-f} 
For any $\psi\in\cgi^+$, the filling $f(\psi)$ belongs to $\cgip$.
\end{claim}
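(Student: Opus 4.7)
The plan is to check that $f(\psi)\in\cgip$ by showing it contains no $(i+1)$-high copy of $\delta_2$ and no $(i+1)$-low copy of $\iota_2$ or $\fd$. By parts~\ref{p-row} and~\ref{p-col} of Observation~\ref{obs-fg}, and by inspection of the explicit definition of $f$ in each case $\cgi^2,\dots,\cgi^5$, the map $f$ alters cell-values only inside the rectangle $X = R\cup C\cup A\cup\{c_{i+1}\}$. Hence every forbidden occurrence in $f(\psi)$ either has all its 1-cells in $S\setminus X$ or uses at least one 1-cell in $X$.

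I would first dispose of occurrences with all 1-cells outside $X$: these also occur in $\psi$, and a direct inspection of the cell-ordering $c_1,\dots,c_N$---noting that $X\setminus\{c_{i+1}\}\subseteq\{c_1,\dots,c_i\}$ because every cell of $X$ other than $c_{i+1}$ lies in a row strictly below $c_{i+1}$ or in its row strictly to the left---forces such a forbidden $\delta_2$ to be $i$-high and such a forbidden $\iota_2$ or $\fd$ to be $i$-low, contradicting $\psi\in\cgi$.

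The substantive part is to rule out forbidden occurrences with a 1-cell in $X$. I would split into cases according to which of $\cgi^2,\cgi^3,\cgi^4,\cgi^5$ contains $\psi$, since the prescription for $f(\psi)$ on $X$ differs in each case. The key tools are parts~\ref{p-frow} and~\ref{p-fcol} of Observation~\ref{obs-fg}: every 1-cell of $f(\psi)$ in $A\cup C$ is shadowed by a 1-cell of $\psi$ in the same row, weakly to the left, inside $A\cup C$, and symmetrically for $A\cup R$. Given a hypothetical forbidden occurrence in $f(\psi)$, I would replace each of its 1-cells in $X$ by such a shadow; the resulting configuration in $\psi$ either directly realises a forbidden pattern of $\psi$ of the same polarity (contradicting $\psi\in\cgi$), or violates a structural constraint on $\cgi^j$ captured by Observations~\ref{obs-rc} and~\ref{obs-acra}---that in $\psi$ at most one of $R$ and $C$ is nonzero and that neither $A\cup C$ nor $A\cup R$ contains a copy of $\iota_2$.

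I anticipate the hardest step to be the $\fd$-check, since $\fd$ spans three rows and three columns, so an $(i+1)$-low occurrence in $f(\psi)$ can distribute its 1-cells across $R$, $A$, $C$ and the region outside $X$ in many combinations. The clean way to dispatch these is case-by-case exploitation of the explicit form of $f(\psi)$: in $\cgi^2,\cgi^4,\cgi^5$ the assignment $c_{i+1}=0$ does most of the work by removing $c_{i+1}$ as a potential 1-cell of the occurrence, while in $\cgi^3$ the fact that $f$ empties the entire rectangle $A$ kills the remaining possibilities. The analogous checks for $\delta_2$ and $\iota_2$ should follow more readily from the column-cyclic structure of $f$ together with Observation~\ref{obs-acra}.
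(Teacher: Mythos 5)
Your overall strategy is the paper's: localize all changes to the rectangle $X$, dispose of occurrences disjoint from $X$ by noting they already occur in $\psi$ with the same polarity, and then ``shadow'' each modified 1-cell back into $\psi$ via parts~\ref{p-frow} and~\ref{p-fcol} of Observation~\ref{obs-fg}. That is the right skeleton, and your treatment of the $\delta_2$ and $\iota_2$ checks would go through essentially as in the paper (the paper does it uniformly, without the case split over $\cgi^2,\dots,\cgi^5$ that you propose, since only the abstract properties in Observation~\ref{obs-fg} are needed).

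The genuine gap is in the $\fd$ check, where your plan mislocates the difficulty. Setting $c_{i+1}=0$ and (in the $\cgi^3$ case) emptying $A$ does not ``do most of the work'': the dangerous configurations are those in which the left 1-cell $u$ of the $\fd$ occurrence lies outside $X$ (necessarily strictly to the left of the columns of $A$, since all three cells lie below $l(c_{i+1})$) while the middle and/or right 1-cells are 1-cells that $f$ has created or shifted inside $R$, $A$ or $C$; the cell $c_{i+1}$ plays no role in these, and in the $\cgi^3$ case $R$, $C$ and $c_{i+1}$ all remain nonzero after applying $f$. Handling these configurations requires two ingredients absent from your plan. First, when the row of $u$ lies strictly below $X$, shadowing $v$ downward and $w$ leftward only yields an occurrence of $\fd$ in $\psi$ after one introduces the maximal rectangle $B$ with bottom-left corner $u$, observes that $\psi$ has no 1-cell in $X\cap B$ (such a cell would form an $i$-low $\iota_2$ with $u$), and deduces that the shadows $v'$ and $w'$ land above, respectively to the right of, $B$ --- without this, the shadowed triple need not realize the shape $\ds$ at all. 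Second, there is a polarity subtlety your ``same polarity'' step glosses over: the resulting occurrence $\{u,v',w'\}$ in $\psi$ is a priori only $(i+1)$-low, which is not forbidden for $\psi\in\cgi$; one must additionally argue that if it fails to be $i$-low then its top-right corner is $c_{i+1}$, forcing $v'\in R$ and $w'\in C$, whence $\{v',w'\}$ is an $i$-high $\delta_2$ in $\psi$ --- a contradiction of a different kind. As written, your proposal would not close the $\fd$ case.
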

\begin{proof}
If $f(\psi)$ has an $(i+1)$-high copy of $\delta_2$, then at least one 1-cell of this copy must be 
in~$X$, because in the rest of $S$, no 1-cells were modified. The other 1-cell must be strictly 
above or strictly to the right of $X$, otherwise the occurrence would be $(i+1)$-low.

However, by parts~\ref{p-row} and~\ref{p-col} of Observation~\ref{obs-fg}, for every 1-cell of 
$f(\psi)$ in $X$, there is a 1-cell of $\psi$ in the same column of $X$ and also a 1-cell of $\psi$ 
in the same row of~$X$. It follows that to any $(i+1)$-high occurrence of $\delta_2$ in $f(\psi)$, 
we may also find an $(i+1)$-high (and therefore $i$-high) occurrence in $\psi$ as well, 
contradicting $\psi\in \cG_i$.

Suppose now that $f(\psi)$ has an $(i+1)$-low occurrence of $\iota_2$, formed by two 1-cells $u$ 
and $v$, where $u$ is to the left and below~$v$. Since the occurrence is $(i+1)$-low, the cell $v$ 
is either in $R\cup\{c_{i+1}\}$ or its row is below the row containing~$R$. It follows that the row 
of $u$ is below the row of~$R$. This leaves the following cases to consider.
\begin{itemize} 
\item $u\in A$ and $v\in R$: this contradicts Observation~\ref{obs-fg} part~\ref{p-r}.
\item $u\in A$ and $v\in A\cup C$: by Observation~\ref{obs-fg} part~\ref{p-ac},  $\psi$ also 
contains an occurrence of $\iota_2$ in $A\cup C$, which is impossible.
\item $u\in A\cup C$ and $v\not\in X$: consequently, $v$ is to the right of $X$ and the row 
containing $v$ intersects~$A$. Then $v$ is also a 1-cell of $\psi$, and by Observation~\ref{obs-fg} 
part \ref{p-row}, $\psi$ has a 1-cell $u'$ in $A$ in the same row as $u$. Then $u'$ and $v$ form an 
$i$-low copy of $\iota_2$ in $\psi$, a contradiction.
\item $u\in A$ and $v=c_{i+1}$: this is impossible, since $c_{i+1}$ is a 1-cell in $f(\psi)$ only if 
$\psi$ is in $\cgi^3$, and in that case~$A=0$.
\item $u\not\in X$:  then $v$ is in $X$, otherwise $u$ and $v$ would form 
a forbidden $\iota_2$ in~$\psi$. Consequently, $u$ is in a row that is below the bottommost row of 
$X$, or in a column that is to the left of the leftmost column of~$X$. If the row containing $u$ is 
below the bottommost row of $X$, then the column containing $C$ and $c_{i+1}$ does not intersect the 
row of $u$ inside the shape $S$ (recall that $X$ is the largest rectangle inside $S$ with top-right 
corner $c_{i+1}$). Hence $v$ is in $A\cup R$ and by Observation~\ref{obs-fg} part~\ref{p-fcol}, 
$\psi$ has a 1-cell $v'\in A\cup R$ in the same column as $v$ and weakly below it. Then $u$ and 
$v'$ form 
a forbidden~$\iota_2$ in $\psi$, a contradiction. Similarly, if the column of $u$ is to the left of 
$X$, then $v$ is in $A\cup C$ and by Observation~\ref{obs-fg} part~\ref{p-frow}, $\psi$ has a 
1-cell $v'\in A\cup C$ in the same row as $v$ and weakly to the left of it, forming again the 
forbidden 
$\iota_2$ with~$u$.
\end{itemize}
The above cases cover all possibilities, and show that $f(\psi)$ avoids an $(i+1)$-low~$\iota_2$.

\begin{figure}[ht]
\centering
\includegraphics[width=0.8\textwidth]{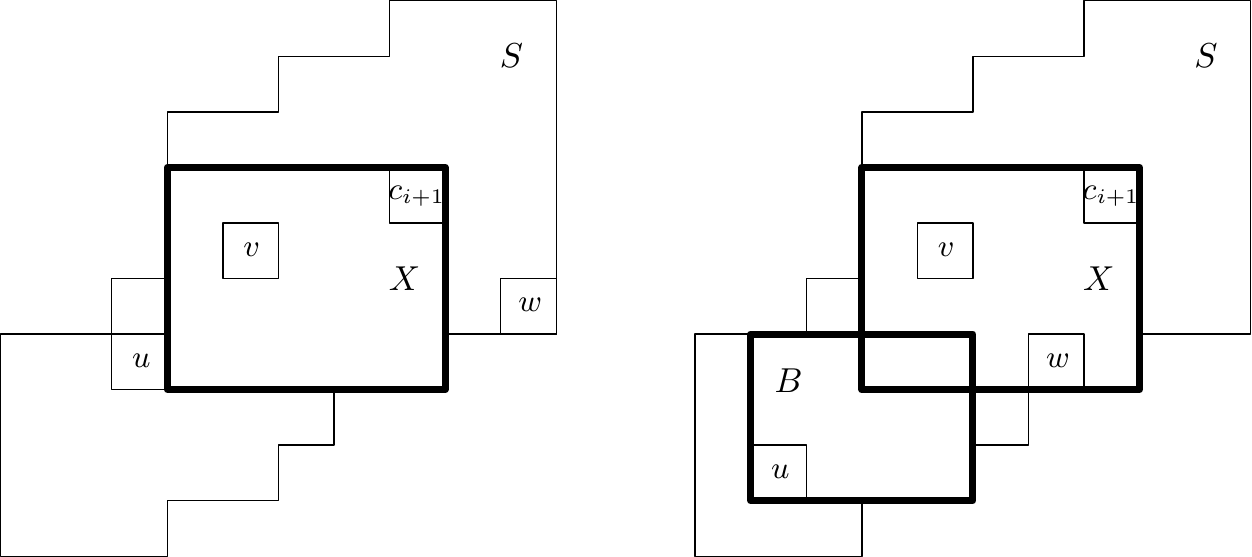}
\caption{An occurrence of $\fd$ in the cells $u$, $v$ and $w$. Left: the situation when the row of 
$u$ intersects~$X$. Right: the situation when the row of $u$ is below~$X$.}
\label{figure_sforb_occ}
\end{figure}

Suppose now that $f(\psi)$ has an $(i+1)$-low occurrence of $\fd$. Let $u$, $v$ and $w$ be the three 
1-cells of this occurrence, ordered left to right. We note that in 
order to get an occurrence of $\fd$, the column containing $u$ must not intersect the row containing 
$v$ inside~$S$, and the row containing $u$ must not intersect the column of $w$ inside~$S$. Since 
all the three 1-cells are below the line $l(c_{i+1})$, this implies that $u$ is strictly to the left 
of the leftmost column of~$A$, otherwise the column of $u$ would intersect the row of $v$ 
inside~$S$. Moreover, at least one of $v$ and $w$ must be inside $X$, otherwise 
the three 1-cells would form a forbidden copy of $\fd$ in~$\psi$. 

Suppose first that the row containing $u$ intersects~$X$; see Figure~\ref{figure_sforb_occ} left. 
Then $w$ must be strictly to the right of 
$X$, otherwise the column of $w$ would intersect the row of $u$ inside~$S$, which cannot happen, as 
we pointed out above. This implies that $v$ is in $X$, and more precisely $v$ must be in $A\cup C$, 
otherwise the copy of $\fd$ would not be $(i+1)$-low. But then, 
by Observation~\ref{obs-fg} part~\ref{p-frow},  $\psi$ has a 1-cell $v'$ in the same row as $v$ and 
weakly to its left, and $\{u,v',w\}$ is a forbidden occurrence of $\fd$ 
in~$\psi$, a contradiction. This shows that the row of $u$ cannot intersect~$X$.

It remains to consider the case where the row of $u$ is strictly below the bottommost row of~$X$; 
see Figure~\ref{figure_sforb_occ} right.
Define $B$ to be the largest rectangle inside $S$ whose bottom-left corner is~$u$. Note that $\psi$ 
has no 1-cell in $X\cap B$, since such a 1-cell would form a forbidden occurrence of $\iota_2$ 
with~$u$. Note also that the column containing $v$ intersects the row containing $w$ inside $B$, 
otherwise $u$, $v$ and $w$ would not form a copy of~$\fd$. 

By Observation~\ref{obs-fg} parts~\ref{p-frow} and~\ref{p-fcol}, 
the filling $\psi$ contains a 1-cell $v'$ in the same column as $v$ and weakly below it, and it 
contains a 1-cell $w'$ in the same row as $w$ and weakly to the left of it. Since $\psi$ has no 
1-cell inside $B$, neither $v'$ nor $w'$ are in $B$.  Moreover, since the bottom-left corner of $B$ 
is to the left and below $X$, we know that $v'$ is above $B$ and $w'$ is to the right of $B$.
Hence $u$, $v'$ and $w'$ form an $(i+1)$-low occurrence of $\fd$ in~$\psi$. 
In fact, this occurrence must be $i$-low as well, otherwise we would have $v'\in R$ and 
$w'\in C$, in which case $v'$ and $w'$ would form a forbidden $i$-high occurrence of $\delta_2$ 
in~$\psi$. We conclude that $\psi$ has a forbidden occurrence of~$\fd$. This contradiction 
shows that $f(\psi)$ has no $(i+1)$-low occurrence of~$\fd$, completing the proof of 
Claim~\ref{cla-f}.
\end{proof}

\begin{claim}\label{cla-g}
For any $\rho\in\cgip^+$, the filling $g(\rho)$ belongs to $\cgi$.
\end{claim}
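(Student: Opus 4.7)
The argument mirrors the structure of Claim~\ref{cla-f}, with the roles of $\cgi$ and $\cgip$ exchanged, and relies on the properties collected in Observation~\ref{obs-fg}. Since $g$ only alters the values of cells inside the rectangle $X$, any forbidden occurrence in $g(\rho)$ must involve at least one cell of~$X$. We verify separately that $g(\rho)$ has no $i$-high copy of $\delta_2$, no $i$-low copy of $\iota_2$, and no $i$-low copy of $\fd$; whenever such an occurrence is exhibited, we shall trace it back to a forbidden occurrence in $\rho$, contradicting $\rho\in\cgip$.

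For a putative $i$-high copy of $\delta_2$ in $g(\rho)$ with 1-cells $u$ and $v$, any of its cells inside $X$ can be shifted by Observation~\ref{obs-fg}(\ref{p-row}) and~(\ref{p-col}) to a 1-cell of $\rho$ in the same row or the same column; this produces an occurrence of $\delta_2$ in $\rho$. If the offending occurrence in $g(\rho)$ has its bounding box with top-right corner at some $c_j$ with $j>i+1$, the reconstructed occurrence is $(i+1)$-high in $\rho$, a contradiction. The only extra case to handle is when the top-right corner sits exactly at $c_{i+1}$: then the occurrence has a 1-cell in $R$ and a 1-cell in $C$ of $g(\rho)$; since, by Observation~\ref{obs-fg}(\ref{p-grr}), any 1-cell of $R\cup C$ in $g(\rho)$ was already a 1-cell in $\rho$, the same pair witnesses a forbidden $(i+1)$-high $\delta_2$ (because the top-right corner of a $\delta_2$ spanning $R$ and $C$ lies strictly above $c_{i+1}$) in~$\rho$.

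For an $i$-low copy of $\iota_2$ in $g(\rho)$ formed by 1-cells $u\prec v$, we split into cases according to where $u$ and $v$ lie relative to~$X$. If both cells lie in $A\cup C$ or inside the columns of $A\cup C$ that were not altered, Observation~\ref{obs-fg}(\ref{p-ac}) directly produces the same $\iota_2$ inside $\rho$, which is $(i+1)$-low. If $v\in R$ with $u\in A$, Observation~\ref{obs-fg}(\ref{p-r}) rules it out. If $u$ lies outside $X$, then $v\in X$; Observation~\ref{obs-fg}(\ref{p-grc}) supplies a 1-cell of $\rho$ in $A\cup R$ (or $A\cup C$) in the same column weakly above (resp.\ same row weakly to the right) of $v$, which together with $u$ produces a forbidden $(i+1)$-low $\iota_2$ in~$\rho$. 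Finally, the case $v=c_{i+1}$ and $u\in A$ uses Observation~\ref{obs-ac}: the cells of $A$ must be zero in $\rho$, yet Observation~\ref{obs-fg}(\ref{p-grc}) would force $\rho$ to have a 1-cell in $A\cup C$ in the same row as $u$, yielding either a forbidden $\iota_2$ in $\rho$ or a contradiction with $\rho\in\cgip$.

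The main obstacle is the $\fd$ case, which parallels the most delicate part of Claim~\ref{cla-f}. Given an $i$-low copy of $\fd$ in $g(\rho)$ with 1-cells $u,v,w$ from left to right, the column of $u$ must lie strictly to the left of~$A$ (otherwise its column would meet the row of $v$ inside~$S$), and at least one of $v,w$ lies in~$X$. If the row of $u$ meets $X$, then $w$ is strictly to the right of~$X$, $v\in A\cup C$, and Observation~\ref{obs-fg}(\ref{p-grc}) produces a 1-cell of $\rho$ in the same row as $v$ weakly to the right of $v$, completing a forbidden copy of $\fd$ in~$\rho$. If the row of $u$ is strictly below~$X$, we let $B$ be the maximal rectangle of $S$ with bottom-left corner~$u$; the column of $v$ must meet the row of $w$ inside~$B$, and using Observation~\ref{obs-fg}(\ref{p-grc}) for cells of $v,w$ that lie in~$X$, we obtain a 1-cell $v'$ of $\rho$ in the same column as $v$ weakly above it and a 1-cell $w'$ of $\rho$ in the same row as $w$ weakly to the right. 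If both $v'$ and $w'$ lie outside~$B$, the triple $\{u,v',w'\}$ gives a forbidden $(i+1)$-low copy of $\fd$ in~$\rho$, with the only potentially awkward subcase $v'\in R$ and $w'\in C$ being ruled out since $R$ and $C$ cannot both be nonzero without creating a new $(i+1)$-high $\delta_2$, contradicting $\rho\in\cgip$. Combining all three cases establishes $g(\rho)\in\cgi$.
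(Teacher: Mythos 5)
Your overall strategy is the same as the paper's (pull each putative forbidden occurrence in $g(\rho)$ back to a forbidden occurrence in $\rho$ via Observation~\ref{obs-fg}), but there is a genuine error at the one point where the $g$-direction is \emph{not} symmetric to the $f$-direction. In the $\delta_2$ case you must deal with a putative $i$-high occurrence whose top-right corner is exactly $c_{i+1}$, i.e.\ $u\in R$ and $v\in C$. You dismiss it by claiming the pullback to $\rho$ is an $(i+1)$-high $\delta_2$ ``because the top-right corner of a $\delta_2$ spanning $R$ and $C$ lies strictly above $c_{i+1}$.'' That is false: $R$ lies in the row of $c_{i+1}$ and $C$ in its column, so the top-right corner of such an occurrence is $c_{i+1}$ itself, and the occurrence is therefore $(i+1)$-\emph{low} --- which is perfectly allowed in $\cgip$ (indeed $\cgip^3$ and $\cgip^5$ consist of fillings with both $R$ and $C$ nonzero). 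So your pullback produces no contradiction, and the case is not closed. The paper instead disposes of it by inspecting the definition of $g$: the column $C$ is nonzero in $g(\rho)$ only when $\rho\in\cgip^2$, and in that case $g$ zeroes the unique $1$-cell of $R$; hence $R$ and $C$ are never simultaneously nonzero in $g(\rho)$ and the configuration cannot occur. The same misconception reappears in your $\fd$ case, where you ``rule out'' the subcase $v'\in R$, $w'\in C$ on the grounds that it would create an $(i+1)$-high $\delta_2$ in $\rho$; again it would not. (That subcase happens to be harmless anyway --- the triple $\{u,v',w'\}$ is then an $(i+1)$-low $\fd$ and hence already forbidden --- and the paper's cleaner dichotomy is: if the reconstructed $\fd$ is $(i+1)$-low it is forbidden, and if it is $(i+1)$-high then $v'$ and $w'$ alone form an $(i+1)$-high $\delta_2$.)

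A secondary gap: in your $\iota_2$ analysis, the case ``$u$ lies outside $X$'' conflates two situations. When the row or column of $u$ meets $X$, shifting $v$ within its row (or column) of $X$ via parts~\ref{p-row} and~\ref{p-col} of Observation~\ref{obs-fg} does give a forbidden $\iota_2$ in $\rho$. But when $u$ is strictly below and to the left of $X$, the witnesses supplied by part~\ref{p-grc} are moved \emph{up} or \emph{to the right}, and may leave the maximal rectangle $B$ with bottom-left corner $u$; then $u$ together with the shifted cell no longer forms an $\iota_2$ inside $S$, and one must instead assemble the two shifted witnesses into a forbidden $(i+1)$-low copy of $\fd$ with $u$, exactly as the paper does. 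Your sketch asserts a forbidden $\iota_2$ here without addressing this. (The subcase ``$v=c_{i+1}$'' you add is vacuous --- an occurrence with top-right cell $c_{i+1}$ is by definition not $i$-low --- and your appeal to Observation~\ref{obs-ac} there is misplaced, since $g(\rho)$ for $\rho\in\cgip^2$ can have both $c_{i+1}$ and $A$ nonzero.)
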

\begin{proof}
Assume first that $g(\rho)$ has an $i$-high copy of~$\delta_2$, formed by two cells $u$ and $v$, 
with $u$ above and to the left of $v$. If the top-right corner of the copy of $\delta_2$ is 
$c_{i+1}$, then $u\in R$ and $v\in C$. However, this is impossible, since the column $C$ of 
$g(\rho)$ is only nonzero when $\rho$ is in $\cgip^2$, and in that case we have $R=0$. If follows 
that the copy of $\delta_2$ is $(i+1)$-high, and in particular, at most one of the two cells $u$ and 
$v$ is in~$X$.  If neither $u$ nor $v$ is in $X$, then $u$ and $v$ form an $(i+1)$-high copy of 
$\delta_2$ in $\rho$, which is impossible. If $u$ is in $X$ and $v$ is not, then $v$ is to the right 
of $X$, and by Observation~\ref{obs-fg} part~\ref{p-row}, $\rho$ has a 1-cell $u'\in X$ in the same 
row of $X$ as $u$. Then $\{u',v\}$ forms a forbidden copy of $\delta_2$ in~$\rho$. If, on the other 
hand, $v$ is in $X$, then $u$ is above $X$, and by Observation~\ref{obs-fg} part~\ref{p-col}, 
$\rho$ has a 1-cell $v'\in X$ in the same column as $v$. Now $\{u,v'\}$ forms a forbidden copy of 
$\delta_2$ in~$\rho$. In all cases we get a contradiction, concluding that $g(\rho)$ has no $i$-high 
copy of~$\delta_2$.

Suppose now that $g(\rho)$ contains an $i$-low copy if~$\iota_2$, and let $u$ and $v$ be its two 
1-cells, with $u$ left and below~$v$. At least one of the two 1-cells must be in $X$, otherwise they 
would form an $i$-low (and therefore $(i+1)$-low) copy of $\iota_2$ in~$\rho$. We distinguish 
several cases:
\begin{itemize}
\item $u\in A$ and $v\in R$: this is excluded by Observation~\ref{obs-fg} part~\ref{p-r}.
\item $u\in A$ and $v\in A\cup C$: then Observation~\ref{obs-fg} part~\ref{p-ac} implies that $\rho$ 
also has a forbidden~$\iota_2$.
\item $u\in X$ and $v\not\in X$: then $v$ must be to the right of $X$, and since the copy of 
$\iota_2$ is $i$-low, the row containing $v$ intersects~$X$. By Observation~\ref{obs-fg} 
part~\ref{p-row}, $\rho$ has a 1-cell $u'$ of $X$ in the same row as $u$, and this 1-cell forms a 
forbidden $\iota_2$ with~$v$.
\item $u\not \in X$ and the row containing $u$ intersects $X$: then $u$ is to the left of $X$,  $v$ 
is in $X$, and by Observation~\ref{obs-fg} part \ref{p-row}, $\rho$ has a 1-cell $v'$ in the same 
row 
as $v$, forming a forbidden copy of $\iota_2$ with~$u$.
\item $u\not \in X$ and the column containing $u$ intersects $X$: this is analogous to the previous 
case, except we use part~\ref{p-col} of Observation~\ref{obs-fg}.
\item $u\not\in X$ and neither the row nor the column of $u$ intersects~$X$: then $u$ is strictly 
bottom-left from $X$, and $v$ is in~$X$.  By part~\ref{p-grc} of Observation~\ref{obs-fg}, $\rho$ 
contains a 1-cell $v'\in X$ in the same column as $v$ and weakly above it, as well as a 
1-cell $v''\in X$ in the same row as $v$ and weakly to the right of it. Let $B$ be the largest 
rectangle inside $S$ whose bottom-left corner is~$u$. If $v'$ or $v''$ is inside $B$, it forms a 
forbidden copy of $\iota_2$ in~$\rho$. Otherwise $v'$ is strictly above $B$ and $v''$ strictly to its 
right, and $\{u,v',v''\}$ is a forbidden copy of $\fd$ in~$\rho$.
\end{itemize}
We conclude that $g(\rho)$ has no $i$-low copy of~$\iota_2$.

Finally, suppose $g(\rho)$ contains an $i$-low copy of $\fd$, and let $u$, $v$ and $w$ be 
its 1-cells ordered left to right. The column containing $u$ does not intersect the row of $v$, and 
the row of $u$ does not intersect the column of $w$ by the definition of~$\fd$. Since the three 
cells are below the line $l(c_i)$, it follows that the column of $u$ is to the left of the leftmost 
column of~$X$, otherwise it would intersect the row of~$v$. In particular, $u$ is also a 1-cell 
in~$\rho$.

We will now show that $\rho$ contains a pair of 1-cells $v',w'$ which form an $(i+1)$-low copy of 
$\fd$ with $u$ or an $(i+1)$-high copy of $\delta_2$, in both cases contradicting $\rho\in\cG_{i+1}$. 

We define $v'$ as follows: if $v$ is a 1-cell of $\rho$, we set $v'=v$. If, on the other hand, $v$ is 
a 0-cell in $\rho$ this means that $v$ is in $A$: indeed, since $g$ modified the value of $v$, we 
know that $v$ is in $X$, moreover, by part~\ref{p-grr} of Observation~\ref{obs-fg}, we have 
$v\not\in R\cup C$; in addition, $v$ is below $\ell(c_i)$ so $v\neq c_{i+1}$. By 
part~\ref{p-grc} of Observation~\ref{obs-fg}, $\rho$ contains a 1-cell of $A\cup R$ in the same 
column as $v$ and weakly above $v$; we then let $v'$ be one such 1-cell.

Similarly, if $w$ is a 1-cell in $\rho$, we let $w'=w$. If $w$ is a 0-cell in $\rho$, then 
necessarily $w$ is in~$A$. By part~\ref{p-grc} of Observation~\ref{obs-fg}, there is a 1-cell of 
$\rho$ in the same row as $w$ and to its right, and we let $w'$ be one such 1-cell.

Since $v'$ is in the same column as $v$ and weakly above it, while $w'$ is in the same row as $w$ 
and weakly to the right of it, it follows that $u$, $v'$ and $w'$ together form a copy 
of~$\fd$. If this copy is $(i+1)$-high, then $v'$ and $w'$ by themselves form an $(i+1)$-high copy 
of $\delta_2$, otherwise we have an $(i+1)$-low copy of~$\fd$. In both cases, we get a contradiction 
with~$\rho\in\cG_{i+1}$.

It follows that $g(\rho)$ is in $\cG_i$, proving Claim~\ref{cla-g}.
\end{proof}

With the help of Claims~\ref{cla-f} and \ref{cla-g}, it is routine to check, for any fixed 
$j\in\{2,3,4,5\}$, that for every $\psi\in\cgi^j$, the filling $f(\psi)$ is in $\cgip^j$ and 
$g(f(\psi))=\psi$, while for every $\rho\in\cgip^j$, $g(\rho)$ is in $\cgi^j$ and 
$f(g(\rho))=\rho$. This means that $f$ and $q$ are mutually inverse bijections between $\cgi^+$ and 
$\cgip^+$. By construction, both $f$ and $g$ preserve the number of 1-cells in each row.
Together with Claim~\ref{cla-1}, this completes the proof of Lemma~\ref{lemma_gi}.

\begin{proof}[Proof of Theorem \ref{thm-genskew}]
Let $N$ be the number of cells of $S$. From Lemma~\ref{lemma_gi}, we immediately get that there is a 
bijection between the fillings of $\mathcal{G}_1$ and $\mathcal{G}_N$. As discussed above, 
$\mathcal{G}_1$ is exactly the set of fillings avoiding $\delta_2$, while $\mathcal{G}_N$
is exactly the set of fillings avoiding $\fd$ and~$\iota_2$. Moreover, all the individual bijections 
we constructed have the property that they preserve row sums. This completes the proof of 
Theorem~\ref{thm-genskew}.
\end{proof}

\section{Conclusion}
Our main motivation for the study of fillings, and fillings of skew shapes in particular, stems from 
their close connection with the problem of enumeration of pattern-avoiding permutations. In 
particular, the skew shape conjecture offers a plausible way of obtaining an infinite family of 
Wilf-ordering relations. Specifically, the conjecture implies that for any two permutations $\alpha$ 
and $\beta$ and any integer $k$, the permutation pattern $\alpha\ominus\delta_k\ominus\beta$ is 
avoided by fewer permutations than the pattern $\alpha\ominus\iota_k\ominus\beta$; see 
\cite[Section 3.2]{phd} and references therein. Although the results of this paper do not seem to 
yield any direct consequences for permutation enumeration, we believe that a better understanding of 
pattern-avoiding skew-shape fillings may pave the way for future progress in this area.

\acknowledgements We thank the two anonymous referees, whose careful reading and insightful 
comments have helped is improve this paper.

\bibliographystyle{plain}
\bibliography{skew}

\end{document}